\newtheorem{theo}{Theorem}[section]
\newtheorem{proposition}[theo]{Proposition}
\newtheorem{corollary}[theo]{Corollary}
\newtheorem{conj}[theo]{Conjecture}
  \newtheorem{example}[theo]{Example}
\numberwithin{equation}{section}
\newcommand{\abs}[1]{\left| #1 \right|}
\newcommand{\N}{\mathbb{N}} %% Conjunto naturales:     \N
\newcommand{\Z}{\mathbb{Z}} %% Conjunto enteros:       \Z
\newcommand{\R}{\mathbb{R}} %% Conjunto reales:        \R
\DeclareMathOperator{\supp}{supp}
\newcommand{\eps}{\varepsilon}
\newcommand{\ro}{\varrho}
\renewcommand{\phi}{\varphi}
\newcommand{\EE}{\mathscr E}
\newcommand{\DD}{\mathscr D}
\newcommand{\II}{\mathcal I}
\newcommand{\alle}[1]{\forall\, #1 \;}
\newcommand{\gibt}[1]{\exists\, #1 \;}
\newcommand{\Oma}{$\mathrm{(}\Omega\mathrm{)}$}
\newcommand{\DN}{$\mathrm{(DN)}$}
\newcommand{\vertiii}[1]{{\left\vert\kern-0.25ex\left\vert\kern-0.25ex\left\vert #1
    \right\vert\kern-0.25ex\right\vert\kern-0.25ex\right\vert}}
\newcommand{\III}{\|\kern-0.25ex |}  %Special norm symbol with three bars
\title[Extension operators for smooth functions]{Extension operators for smooth functions on compact subsets of the reals}
\author{Leonhard Frerick, Enrique Jord\'a, and Jochen Wengenroth}
\address{Fachbereich IV -- Mathematik, Universit\"{a}t Trier, D-54286 Trier, Germany}
\email{frerick@uni-trier.de}
\address{Departamento de Matem\'atica Aplicada, E. Polit\'{e}cnica Superior
de Alcoy, Universidad Polit\'ecnica de Valencia, Plaza Ferr\'andiz
y Carbonell 2, E-03801 Alcoy (Alicante), Spain}
\email{ejorda@mat.upv.es}
\address{Fachbereich IV -- Mathematik, Universit\"{a}t Trier, D-54286 Trier, Germany}
\email{wengenroth@uni-trier.de}
\subjclass[2010]{47A57, 46E25, 46A63}
\keywords{extension operator, spaces of smooth functions}
\begin{document}

\begin{abstract}
  We introduce sufficient as well as necessary conditions for a compact set $K$ such that there is a continuous linear extension operator from
  the space of restrictions $C^\infty(K)=\{F|_K: F\in C^\infty(\R)\}$ to $C^\infty(\R)$. This allows us to deal with examples of the form $K=\{a_n:n\in\N\}\cup\{0\}$
  for $a_n\to 0$ previously considered by Fefferman and Ricci as well as Vogt.
\end{abstract}

\maketitle

\section{Introduction}
For a compact subset $K$ of $\R^d$ we endow the space of smooth restrictions
\[
C^\infty(K)=\{F|_K: F\in C^\infty(\R^d)\}
\]
with the quotient topology of the Fr\'echet space $C^\infty(\R^d)$, i.e., with the sequence of norms
\begin{align*}
  \III f\III_n & =\inf\{\|F\|_n: F|_K=f\} \text{ where }
\\ & \|F\|_n=\sup\{|\partial^\alpha F(x)|: x\in\R^d,|\alpha|\le n\}.
\end{align*}
This is a Fr\'echet space and the restriction operator $R: C^\infty(\R^d)\to C^\infty(K)$, $F\mapsto F|_K$ is surjective.
We are interested in the question whether there exists a continuous linear {\it extension operator} $E:C^\infty(K)\to C^\infty(\R^d)$ which means that
$R\circ E= Id_{C^\infty(K)}$.
If this is the case we say that that $K$ has the {\it smooth extension property}. Till know, very few cases are understood, a remarkable result of \cite{BiMi} says
that {\it semicoherent subanalytic sets} have the smooth extension property.

%
%
% Only for subanalytic sets there is a complete characterization (in terms of {\it semicoherence}) due to Bierstone and Milman \cite{BiMi}.
% THAT'S NOT CLEAR. They prove that semicoherence is sufficient and ask if it necessary. What they write suggests that the problem reduces to:
% If there is an extension operator is there also one which is continuous with respect to the 0-th norms
%
%

One of the many difficulties with this question is that, for small $K$, there are no derivatives for $f\in C^\infty(K)$ so
that many classical analytical tools are not directly accessible. In one dimension -- and this is the case we concentrate on -- one can use divided differences as a substitute,
they were used, e.g., by Merrien \cite{Me} to prove $C^\infty(K)=\bigcap_{n\in\N} C^n(K)$ for $K\subseteq \R$.

This equality is no longer true in higher dimensions (for subanalytic sets $K$ it is equivalent to semicoherence \cite{BMP,BiMi}, an elementary example can be found in
\cite{Pa}) so that the recent deep result of Fefferman \cite{Fe} that, for every $n\in\N$, there is an extension operator $E_n:C^n(K)\to C^n(\R^d)$ is not directly applicable.

Instead of $C^\infty(K)$ one can consider the space of Whitney jets
\[
  \EE(K)=\{(\partial^\alpha F|_K)_{\alpha\in\N_0^d}: F\in C^\infty(\R^d)\}
\]
also endowed with the quotient topology from $C^\infty(\R^d)$. The corresponding question whether there is a continuous linear extension operator $\EE(K)\to C^\infty(\R^d)$
(then $K$ has the {\it Whitney extension property})
is not completely solved but much better understood than the smooth extension property, we refer to \cite{Fr} for many sufficient and necessary conditions.
It is proved in \cite[Remark 3.13]{Fr} that the existence of an extension operator for $\EE(K)$ implies $\EE(K)=C^\infty(K)$ (more precisely, $(\partial^\alpha F|_K)_{\alpha\in\N_0^d}
\mapsto F|_K$ gives an isomorphism), thus, the Whitney extension property implies the smooth extension property. Therefore, if $K$ is the closure of its interior and has Lipschitz boundary
\cite{St} or, more generally, not too sharp cusps \cite{BoMi,PaPl2} then it has both extension properties. The same holds for such porous sets as the Sierpi\'nski triangle \cite{FJW}.
An example with $C^\infty(K)=\EE(K)$ and without extension property is the sharp cusp $\{(x,y)\in\R^2: 0\le x\le 1, 0\le y\le \exp(-1/x)\}$ in \cite{Ti}.

However, if $C^\infty(K)$ is different from $\EE(K)$ much less is known. The extreme case of a singleton $K$ has the smooth extension property
(trivially, since $C^\infty(K)$ is one-dimensional) but not the Whitney extension property \cite{Mi}. The same holds for semicoherent subanalytic sets with empty interior.

For general sets without further analytical properties a characterization of the smooth
extension property seems to be far out of reach. In this article we continue the investigation of rather special sets $K=\{a_n:n\in\N\} \cup \{0\}$ for real sequences $a_n\to 0$
as considered by Fefferman and Ricci \cite{FeRi} and Vogt \cite{Vo}. In \cite{FeRi} it is shown
that for $a_n=n^{\alpha}$ with $\alpha < 0$ the set has the smooth extension property.

This was generalized by Vogt to decreasing sequences $a_n\to 0$ such that
\begin{enumerate}[(a)]
\item $a_n-a_{n+1}$ is decreasing,
\item $a_n/a_{n+1}$ is bounded, and
\item $a_n^q/(a_n-a_{n+1})$ is bounded for some $q\in \N$.
\end{enumerate}
In particular, $\{e^{-n}: n\in\N\}\cup \{0\}$ has the smooth extension property. However, examples like $a_n=1/\log(n)$, $a_n=e^{-n^2}$, or $a_n=e^{-2^n}$
are not covered by Vogt's approach. We are going to introduce several sufficient conditions as well as necessary ones in order to deal with such sequences.

Whereas Fefferman and Ricci gave an explicit construction of an extension operator Vogt as well as Bierstone and Milman
used the splitting theory for short exact sequences of Fr\'echet spaces and we will follow this strategy. For the ideal
$\II_K=\{F\in C^\infty(\R^d): F|_K=0\}$ we have a short exact sequence
\[
  0\to \II_K \to C^\infty(\R^d) \stackrel{R}{\to} C^\infty(K) \to 0
\]
and, by definition, $K$ has the smooth extension property if and only if the sequence splits in the category of
Fr\'echet spaces (the right inverses of $R$ are precisely the extension operators). The celebrated splitting theorem of Vogt and Wagner
\cite[chapter 30]{MV} says that it is sufficient to prove that $\II_K$ satisfies the topological invariant ($\Omega$) and $C^\infty(K)$ satisfies (DN) (has a dominating norm,
we will recall the definitions later on).
If $K$ has the smooth extension property we
can replace $E(f)$ by $\phi E(f)$ where $\phi$ is a smooth function with compact
support and equal to $1$ near $K$ to obtain an extension operator with values in $\DD(B)$ for some ball $B$. Since $\DD(B)$ satisfies (DN) and ($\Omega$) we conclude that
$K$ has the smooth extension property if and only if $\II_K \in (\Omega)$ and $C^\infty(K) \in$ (DN).

In section 2 we will show ($\Omega$) not only for $\II_K$ with compact subsets of $\R$ but for every closed ideal $\II$ in $C^\infty(\R)$  (the case
$\II=\{F\in C^\infty(\R): F^{(k)}|_K=0 \text{ for all } k\in\N_0\}$ is known and corresponds to $\EE(K)$).
Therefore, $K\subseteq \R$ has the smooth extension property if and only if $C^\infty(K)$ satisfies (DN), and we will prove a sufficient condition in section 3 and two necessary ones in section 4.
This allows us to show that $K=\{a_n:n\in\N\} \cup \{0\}$ has the smooth extension property for the very fast decaying sequence $a_n=e^{-n^2}$ but it does not for
extremely fast sequences like $a_n=e^{-2^n}$.

\section{Closed ideals in $C^\infty(\R)$}

In this section we will show that every closed ideal of $C^\infty(\R)$ satisfies property ($\Omega$) of Vogt and Wagner \cite{VoWa,MV}.
This is possible because a simple instance of Whitney's spectral theorem \cite{Wh2,Ma} gives a full description of all closed ideals $\II$: There is a {\it multiplicity function}
$\mu:\R\to\N_0 \cup \{\infty\}$ such that
\[
  \II=\{f\in C^\infty: f^{(j)}(x)=0 \text{ for all $x\in\R$ and $0\le j <\mu(x)$}\}
\]
(for $\mu(x)=0$ there is thus no condition on $f(x)$). We will prove ($\Omega$) in the following form (which is equivalent to the submultiplicative inequalities for the dual norms
in the definition in \cite[chapter 29]{MV}):
A Fr\'echet space $X$ with fundamental sequence of seminorms $\|\cdot\|_n$ satisfies ($\Omega$) if
\[
\alle{n\in\N} \gibt{m\ge n} \alle{k\ge m} \gibt{s\in\N, c>0} \alle{\eps>0}
\text{ every $x\in X$ with } \|x\|_m\le 1
\]
\[
\text{can be written as } x=x-y+y  \text{ such that}
\|x-y\|_n\le \eps \text{ and } \|y\|_k\le c\eps^{-s}.
\]
Note that these are approximation problems with respect to the $n$-th norm requiring specific bounds for the $k$-th norm of the approximants.
We are going to solve these problems in $\II$ for the seminorms $\| f\|_n=\sup\{|f^{(j)}(x)|: |x|\le n, 0\le j\le n\}$ by using rather classical
approximation properties of Hermite interpolation polynomials.
Below, we will explain that the following theorem generalizes in a certain sense Merrien's result mentioned above.

\begin{theo}\label{Omega}
Every closed ideal $\II$ of $C^\infty(\R)$ satisfies \Oma.
\end{theo}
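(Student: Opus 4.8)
The plan is to solve, directly and with a single construction, the approximation problems in the stated form of \Oma. By Whitney's spectral theorem $\II=\{f\in C^\infty(\R):f^{(l)}(x)=0\text{ for }0\le l<\mu(x)\}$ for a multiplicity function $\mu$; write $Z=\{\mu\ge1\}$. Fix $n$ and set $m:=n+2$. Given $k\ge m$, a number $\eps\in(0,1)$ and $f\in\II$ with $\|f\|_m\le1$, I would put $h:=(\eps/C_0)^{1/2}$ (with $C_0$ fixed at the end), cover $[-n-1,n+1]$ by intervals of length $h$, and fix a smooth partition of unity $(\phi_j)$ subordinate to their slight enlargements, with bounded overlap and $|\phi_j^{(l)}|\le c_lh^{-l}$, together with a fixed cutoff $\chi\in\DD(-n-2,n+2)$ with $\chi\equiv1$ on $[-n-1,n+1]$.

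For each $j$ set $q_j:=\sum\{\mu(z):z\in Z\cap\supp\phi_j\}\in\N_0\cup\{\infty\}$. If $q_j>m$ put $P_j:=0$; if $q_j\le m$ let $P_j$ be the Hermite interpolation polynomial of $f$ of degree $<m$ with nodes the points of $Z\cap\supp\phi_j$ (carrying their multiplicities $\mu(z)$) together with enough auxiliary nodes in $\supp\phi_j$ to bring the total multiplicity to exactly $m$. Since $f^{(l)}(z)=0$ for $l<\mu(z)$, such a $P_j$ vanishes to order $\ge\mu(z)$ at each $z\in Z\cap\supp\phi_j$. Then I set
\[
  y:=\chi\sum_j\phi_jP_j\in\DD(-n-2,n+2).
\]
That $y\in\II$ follows by a Leibniz computation at every $z\in Z$: near $z$ only finitely many $\phi_j$ survive, each corresponding $P_j$ (be it $0$ or the Hermite polynomial) vanishes to order $\ge\mu(z)$ at $z$, and multiplication by the smooth $\chi\phi_j$ preserves this.

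It then remains to verify the two inequalities, and this is where classical, \emph{spacing-independent} properties of Hermite interpolation are used, in the spirit of Merrien's divided-difference method. Writing $P_j$ in Newton form and estimating its divided differences by the mean-value identity $[x_0,\dots,x_i]f=f^{(i)}(\xi)/i!$ yields $\|P_j^{(r)}\|_{C(\supp\phi_j)}\le e^{h}\|f\|_m$ for all $r$, with $P_j^{(r)}\equiv0$ for $r\ge m$ --- a bound that does not depend on $h$ or on how tightly the nodes cluster (essential, since near $\{\mu=\infty\}$ they may be arbitrarily close). With $|\phi_j^{(l)}|\le c_lh^{-l}$, the bounded overlap, and the fixed $\chi$, this gives $\|y\|_k\le c\,h^{-k}=c\,C_0^{k/2}\eps^{-k/2}$, i.e.\ $\|y\|_k\le c'\eps^{-s}$ with $s=\lceil k/2\rceil$. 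For the other inequality one works on $[-n,n]$, where $\chi\equiv1$ and $\sum_j\phi_j\equiv1$, so $y-f=\sum_j\phi_j(P_j-f)$. When $q_j>m$, $f$ has, on an interval of length $O(h)$, more than $m$ zeros counted with the multiplicities $\mu(z)$; hence the Hermite interpolant of $f$ at $m$ of those conditions is $\equiv0$, and iterated Rolle --- which only ever invokes $f^{(m)}$, controlled by $\|f\|_m$ --- gives $\|f^{(i)}\|_{C(\supp\phi_j)}\le(Ch)^{m-i}\|f\|_m$ for $i\le m$. When $q_j\le m$ the same argument applied to $f-P_j$ (which has $m$ zeros counted with multiplicity and satisfies $(f-P_j)^{(m)}=f^{(m)}$) gives $\|(f-P_j)^{(i)}\|_{C(\supp\phi_j)}\le(Ch)^{m-i}\|f\|_m$. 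In both cases $|(y-f)^{(i)}|\le C'\sum_l h^{-l}h^{m-i+l}\le C''h^{m-i}\le C''h^2$ for $i\le n$, and a suitable choice of $C_0$ makes this $\le\eps$.

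The main obstacle is exactly this uniformity: the bounds for $P_j$ and for the interpolation error must not deteriorate when the local geometry of $Z$ degenerates --- nodes coalescing near $\{\mu=\infty\}$, or a short interval carrying very large (even infinite) total multiplicity. This is why the naive Hermite remainder formula, which involves $f^{(N)}$ with $N=\deg P_j$ and is uncontrolled, is useless here, and one is forced to the Newton/divided-difference representation and to iterated Rolle, both of which only see $f$ up to order $m$. Once the dichotomy ``$q_j\le m$'' versus ``$q_j>m$'' is isolated, arranging the rest is routine.
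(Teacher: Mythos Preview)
Your approach is essentially the paper's own: a partition of unity at a small scale, on each piece either $P_j=0$ (many zeros of $\II$) or a Hermite interpolant (few zeros), with the spacing-independent control coming from the Newton/divided-difference representation $f[x_0,\dots,x_i]=f^{(i)}(\xi)/i!$ and iterated Rolle. The only cosmetic differences are that the paper takes $m=2n+1$ and uses $\eps$ itself as the mesh (with a slightly cruder Leibniz bookkeeping that costs an extra factor $\eps^{-n}$), whereas your sharper Leibniz bound lets you get away with $m=n+2$ and $h\sim\eps^{1/2}$; also, the paper builds the cutoff into the partition by putting $g_\ell=0$ for $|\ell|>(n+1)/\eps$ instead of multiplying by a separate $\chi$.
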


\begin{proof}
We take a multiplicity function $\mu$ for $\II$ as above.
For $n\in\N$ we will prove the \Oma-condition with $m=2n+1$ and $s=k$ (for given $k\ge m$). Even the constants $c=c_k$ will turn out to be independent of the ideal.
In the following, $c$ and $c_k$ always denote constants which are independent of $f$ and $\eps>0$ and may vary at different occurrences.

We start with a partition of unity of the form
  \[
    \sum_{\ell\in\Z} \phi(x-\ell)=1
  \]
where $\phi$ is a positive smooth function with support in the interval $(-1,1)$ so that, for each $x\in \R$, at most two terms of the series do not vanish. Such a partition
can be seen, e.g., in \cite[Theorem 1.4.6]{Ho}. For $c_k=\|\phi\|_k$ and $\eps>0$ the scaled functions $\ro_{\eps,\ell}(x)=\phi(x/\eps-\ell)$ then satisfy
$\|\ro_{\eps,\ell}\|_k\le c_k/\eps^{k}$, $\supp \ro_{\eps,\ell} \subseteq I_\ell=((\ell-1)\eps,(\ell+1)\eps)$, and $\sum_\ell \ro_{\eps,\ell}(x)=1$ with again
at most two non-vanishing terms.
Given now  $f\in \II$ with $\|f\|_m\le 1$ and $\eps\in (0,1)$ (for $\eps\ge 1$ there is the trivial \Oma-decomposition $f=f-0+0$) we make the following ansatz:
\[
  g=\sum_{\ell\in\Z} \ro_{\eps,\ell} g_\ell
\]
with polynomials $g_\ell$ of degree $m$ to be chosen in a way such that $\ro_{\eps,\ell} g_\ell \in\II$, $\|f-g\|_n \le \eps$, and $\|g\|_k\le c_k \eps^{-k}$.

For the choice of $g_\ell$ we distinguish two cases depending on the number $N_\ell$ of prescribed zeroes (counted with multiplicities)
of the ideal in $I_\ell$, that is, $N_\ell=\sum_{x\in I_\ell} \mu(x)$.
If $N_\ell > m+1$ or $|\ell| > (n+1)/\eps$ we just put $g_\ell=0$. Otherwise, we increase one of the $\mu(x)$ for an arbitrarily chosen $x\in I_\ell$ so that
$N_\ell=m+1$, and take $g_\ell$ as the unique solution of the Hermite interpolation problem with data $\{(x,f^{(j)}(x)): x\in I_\ell, 0\le j< \mu(x)\}$. This means that
$g_\ell$ is a polynomial of degree $m$ such that $g_\ell^{(j)}(x)=f^{(j)}(x)$ for all $x\in I_\ell$ and $0\le j <\mu(x)$.
Since $f\in\II$ the polynomial $g_\ell$ satisfies in $I_\ell$ all necessary conditions for belonging to $\II$. Therefore, $\ro_{\eps,\ell} g_\ell \in \II$ and hence $g\in \II$.

We will first estimate $\|g\|_k$ with $k\ge m$ for which it is enough to estimate $\|\ro_{\eps,\ell}g_\ell\|_k$ for each $\ell$ with $g_\ell\neq 0$.
For $j\le k$ we apply Leibniz' rule and
the inequalities for the derivatives of $\ro_{\eps,\ell}$ from above to get, for all $x\in I_\ell$,
\begin{align*}
  \left|\left(\ro_{\eps,\ell} g_\ell\right)^{(j)}(x)\right| & \le c_k \eps^{-k} \sup\{|g_\ell^{(i)}(x)| :0 \le i \le k\} \\
  & =  c_k \eps^{-k} \sup\{|g_\ell^{(i)}(x)| :0 \le i \le m\}
\end{align*}
because $g_\ell$ is a polynomial of degree $m$. To estimate the derivatives of $g_\ell$ we need the concrete form of the Hermite interpolation
polynomials and, in order to be consistent with
the commonly used notation as, e.g., in \cite[chapter 4,\S 6]{DevoLo}, we fix an ordered vector $(x_0,\ldots,x_m)$ in which each $x\in I_\ell$ appears $\mu(x)$ times. Then
\[
  g_\ell(x)=\sum_{s=0}^m f[x_0,\ldots,x_s](x-x_0)\cdots(x-x_{s-1})
\]
with the (generalized) divided differences as coefficients. For real valued $f$ (which we may assume, of course) there are $\xi_s\in I_\ell$ such that
$f[x_0,\ldots,x_s]=f^{(s)}(\xi_s)/s!$. Since $|x-x_j|\le 2\eps \le 2$ for $x\in I_\ell$ we thus get $|g_\ell^{(j)}|\le c\|f\|_m$ on $I_\ell$ and hence
\[
  \|g\|_k \le c_k\eps^{-k}\|f\|_m.
\]
It remains to show $\|f-g\|_n\le c\eps$ with a constant independent of $\eps$ (which afterward can be removed by applying the obtained decomposition for $\tilde \eps=\eps/c$), and
because of $f-g= \sum \ro_{\eps,\ell}(f-g_\ell)$ it is again enough to estimate each term. We do this for the case where $g_\ell\neq 0$, the other one is similar
(and even a particular case of the following arguments by choosing $x_0,\ldots,x_m$ arbitrarily among the zeroes of $\II$ in $I_\ell$).

Given $x_0,\ldots,x_m$ as above we write $H$ for the linear map assigning to $h\in C^m(I_\ell)$ its Hermite interpolation polynomial for the data
$\{(x,h^{(j)}(x)): x\in I_\ell, 0\le j< \mu(x)\}$. Then $H$ is a projector onto the subspace of polynomials up to degree $m$. For the midpoint $y=\ell\eps$ of $I_\ell$
and the Taylor polynomial $T_y^mf$ of degree $m$ around $y$ we thus have on $I_\ell$
\[
  f-g_\ell=f-H(f)=(f-T_y^mf) + H(T_y^mf - f).
\]
By Taylor's theorem, the derivatives up to order $n$ of the first term are less than $2\|f\|_m\eps^{m-n}$. The $j$-th derivative of the second term is
\begin{align*}
  \left( H(T_y^mf -f)\right)^{(j)}(x) &= \sum_{s=0}^m (T_y^mf -f)[x_0,\ldots,x_s] \left( (x-x_0)\cdots(x-x_{s-1})\right)^{(j)} \\
  &= \sum_{s=j}^m (T_y^mf -f)^{(s)}(\xi_s) \left( (x-x_0)\cdots(x-x_{s-1})\right)^{(j)}
\end{align*}
with $\xi_s\in I_\ell$. On $I_\ell$ we thus get again by Taylor's theorem and Leibniz' formula a constant $c$ (depending only on $m$) with
\begin{align*}
  \left|\left( H(T_y^mf -f)\right)^{(j)}(x)\right| & \le c \sum_{s=j}^m  \left| (T_y^{m-s}f^{(s)} -f^{(s)})(\xi_s)\right| (2\eps)^{s-j} \\
  & \le c\sum_{s=j}^m \|f\|_m \eps^{m-s}(2\eps)^{s-j} \le \tilde c\eps^{m-j} \|f\|_m \le \tilde c \eps^{m-n}.
\end{align*}
Combining this with $|\ro_{\eps,\ell}^{(j)}(x)|\le c_n\eps^{-n}$ and $m=2n+1$ we finally get
\[
  \|f-g\|_n \le c \eps^{m-2n}= c\eps. \qedhere
\]
\end{proof}

In the proof above we did not use that $f$ is smooth but only that $f\in C^{2n+1}(\R)$. In particular, we have shown for any  set $K\subseteq \R$ and
\[
  \II^n_K=\{f\in C^n(\R): f|_K=0\}
\]
that every $f\in \II_K^{2n+1}$ can be decomposed as $f=f-g+g$ with $g\in \II^\infty_K$ and $\|f-g\|_n <\eps$.
(The proof even simplifies a bit because one does not need the estimate for $\|g\|_k$ and, because all zeroes of the ideals are simple, one can
use Lagrange instead of Hermite interpolation.) Expressed differently, the closure of $\II^\infty_K$ in $\II_K^n$ contains $\II_K^{2n+1}$.
This {\it reducedness} of the projective spectrum $(\II_K^n)_{n\in\N_0}$ allows us to apply the abstract Mittag-Leffler procedure, see, e.g., \cite[section 3.2]{We}:
For each $n\in\N_0$ we have a short exact sequence
\[
  0\to \II_K^n \to C^n(\R) \to C^n(K) \to 0
\]
and the projective limit (with respect to the inclusions as spectral maps) of these sequences is
\[
  0\to \II^\infty_K \to C^\infty(\R) \to \bigcap_{n\in\N_0} C^n(K) \to 0.
\]
In general, the projective limit of exact sequences need not be exact at the last spot and the non-exactness is measured by the first derivative of the
projective limit functor. The abstract Mittag-Leffler theorem \cite[Theorem 3.2.1]{We} now states that this first derivative vanishes for reduced spectra.
As this is case here we get that the limit is indeed exact. We have thus a new proof of the following result from \cite{Me}:

\begin{theo}[Merrien]\label{Merrien}
  $C^\infty(K)=\bigcap\limits_{n\in\N} C^n(K)$ for every set $K\subseteq \R$.
\end{theo}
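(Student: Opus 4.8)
The plan is to deduce Merrien's theorem purely from the reducedness observation made just before the statement, using the abstract Mittag-Leffler procedure exactly as sketched in the paragraph preceding the theorem. First I would fix the notation: set $X_n = C^n(\R)$ and $Y_n = C^n(K)$ with their natural Fr\'echet (or Banach, locally) topologies, and let the spectral maps be the inclusions $X_{n+1}\hookrightarrow X_n$ and $Y_{n+1}\hookrightarrow Y_n$. The restriction maps $R_n: X_n \to Y_n$ are surjective by definition of $C^n(K)$, and $\II_K^n = \ker R_n$, so for each $n$ we have a short exact sequence of the form displayed in the excerpt. These fit together into a short exact sequence of projective spectra, whose projective limit is the sequence $0\to \II_K^\infty \to C^\infty(\R)\to \bigcap_n C^n(K)\to 0$, with $C^\infty(\R)=\projlim C^n(\R)$ (clear) and $\bigcap_n C^n(K)=\projlim C^n(K)$ by definition of the intersection. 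The only thing that is not automatic is exactness at the right-hand spot.

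The key step is to verify that the spectrum $(\II_K^n)_{n}$ is \emph{reduced} in the sense needed for \cite[Theorem 3.2.1]{We}, i.e. that for each $n$ the closure of $\II_K^\infty$ (equivalently, of the image of $\II_K^m$ for large $m$) in the topology of $\II_K^n$ is all of (the image of) the spectrum; concretely it suffices that the closure of $\II_K^\infty$ in $\II_K^n$ contains $\II_K^{2n+1}$. But this is exactly what the remark following Theorem~\ref{Omega} establishes: running the proof of Theorem~\ref{Omega} with the ideal $\II = \II_K^\infty$ and only using $f\in C^{2n+1}(\R)$, one gets for every $f\in \II_K^{2n+1}$ and every $\eps>0$ a decomposition $f = (f-g)+g$ with $g\in\II_K^\infty$ and $\|f-g\|_n<\eps$. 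Hence $\II_K^{2n+1}\subseteq \overline{\II_K^\infty}^{\,\|\cdot\|_n}$, which is the required reducedness condition. With reducedness in hand, the abstract Mittag-Leffler theorem says the first derived functor $\projlim^1$ of the spectrum $(\II_K^n)$ vanishes, and the standard long exact sequence for $\projlim$ then gives exactness of $0\to \II_K^\infty \to C^\infty(\R)\to \bigcap_n C^n(K)\to 0$ at the last spot. In particular $R: C^\infty(\R)\to \bigcap_n C^n(K)$ is surjective, i.e. every $f\in\bigcap_n C^n(K)$ is the restriction to $K$ of some $F\in C^\infty(\R)$, so $\bigcap_n C^n(K)\subseteq C^\infty(K)$. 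The reverse inclusion $C^\infty(K)\subseteq \bigcap_n C^n(K)$ is trivial since a smooth extension is in particular $C^n$ for every $n$; this proves $C^\infty(K)=\bigcap_n C^n(K)$ as sets, and the same surjectivity shows the quotient topologies agree.

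The main obstacle is really just making the Mittag-Leffler bookkeeping precise: one must check that the hypotheses of \cite[Theorem 3.2.1]{We} are met — in particular that one is dealing with a projective spectrum of (say) Fr\'echet spaces with the inclusions as continuous linear spectral maps, and that "reduced" is interpreted in the correct (topological, not algebraic) sense, which is precisely why the factor $2n+1$ rather than $n$ appears. Everything else is formal: the exactness of each finite-level sequence is immediate, the identification of the projective limits is by definition, and the derived-functor long exact sequence is standard homological algebra of projective spectra. I would therefore spend the bulk of the write-up citing the reducedness estimate from the remark above and quoting \cite[Theorem 3.2.1]{We}, and only a line or two on the trivial reverse inclusion and the topological statement.
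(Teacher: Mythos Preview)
Your proposal is correct and follows essentially the same argument as the paper: you set up the projective spectrum of short exact sequences $0\to \II_K^n \to C^n(\R)\to C^n(K)\to 0$, invoke the reducedness $\II_K^{2n+1}\subseteq \overline{\II_K^\infty}^{\,\|\cdot\|_n}$ extracted from the proof of Theorem~\ref{Omega}, and apply the abstract Mittag-Leffler theorem \cite[Theorem 3.2.1]{We} to get exactness of the limit sequence at the last spot. This is precisely the paper's proof, with the only addition being that you spell out the trivial reverse inclusion $C^\infty(K)\subseteq \bigcap_n C^n(K)$ and the identification of topologies.
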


We do not know any compact set $K\subseteq \R^d$ such that the vanishing ideal  $\II_K=\{f\in C^\infty(\R^d): f|_K=0\}$ does not satisfy \Oma.
To be concrete, we thus state a very optimistic conjecture:

\begin{conj}
  Every vanishing ideal $\II_K$ in $C^\infty(\R^d)$ satisfies \Oma.
\end{conj}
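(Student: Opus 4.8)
Since this is posed as a (very optimistic) conjecture, what follows is a strategy rather than a proof. Two families of $K$ are covered for free: Theorem~\ref{Omega} gives \Oma\ for $\II_K$ whenever $d=1$; and, in any dimension, if $K$ has the smooth extension property then the splitting of $0\to\II_K\to C^\infty(\R^d)\to C^\infty(K)\to 0$ realises $\II_K$ as a complemented subspace of $C^\infty(\R^d)$, which satisfies \Oma, and hence so does $\II_K$ -- this already settles all semicoherent subanalytic $K$ \cite{BiMi}, and indeed every $K$ presently known to have the smooth extension property. The genuine content of the conjecture thus concerns wild $K$ in $\R^d$ with $d\ge 2$. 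Since property \Oma\ is not inherited by closed subspaces, it does not follow for $\II_K$ merely from the fact that $C^\infty(\R^d)$ has it; one has to exploit the structure of the vanishing ideal.

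The natural plan is to imitate the proof of Theorem~\ref{Omega}. Fix $n$, take a partition of unity $\sum_Q\ro_{\eps,Q}=1$ subordinate to a grid of cubes $Q$ of side $\eps$ with $\|\ro_{\eps,Q}\|_k\le c_k\eps^{-k}$, and for $f\in\II_K$ with $\|f\|_m\le 1$ make the ansatz $g=\sum_Q\ro_{\eps,Q}g_Q$ with $g_Q$ a polynomial of degree $m=m(n)$ chosen so that $\ro_{\eps,Q}g_Q\in\II_K$, i.e.\ $g_Q$ vanishes on $K\cap\supp\ro_{\eps,Q}$. By Leibniz' rule everything then reduces, exactly as in dimension one, to constructing on each cube a polynomial $g_Q$ of bounded degree with (a) $g_Q=0$ on $K\cap Q'$ for a fixed dilate $Q'$ of $Q$, (b) $|\partial^\alpha(f-g_Q)|\le c\|f\|_m\,\eps^{m-n}$ on $Q$ for $|\alpha|\le n$, and (c) $|\partial^\alpha g_Q|\le c\|f\|_m$ on $Q$ for $|\alpha|\le m$; summing $g$ and $f-g=\sum_Q\ro_{\eps,Q}(f-g_Q)$ and taking $m=2n+1$ then yields $\|f-g\|_n\le c\eps$ and $\|g\|_k\le c_k\eps^{-k}\|f\|_m$, which is \Oma\ with $s=k$ -- the very shape obtained for $d=1$.

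The crux, and where I expect the real difficulty, is the local problem (a)--(c). In one variable it is handled by a dichotomy: either $K\cap I_\ell$ imposes at most $m+1$ Hermite conditions, and the interpolating polynomial works, its divided-difference coefficients being controlled by $\|f\|_m$; or it imposes more, and Rolle's theorem forces $f$ itself to be $O(\eps^{m-n})$ in $C^n(I_\ell)$, so $g_Q=0$ suffices. In $\R^d$ both halves fail: there is no Whitney spectral theorem describing $\II_K$ near a point by a finite list of jet conditions, multivariate Hermite interpolation at the points of $K\cap Q'$ need not be unisolvent, and a function vanishing on a geometrically complicated subset of a small cube need not be small in $C^n$ there. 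One could pass to a finite $\eps$-net $z^1_Q,\dots,z^r_Q$ of $K\cap Q'$, interpolate the $m$-jet of $f$ at these points after a small perturbation making the configuration unisolvent, and treat the over-determined case by a quantitative \L ojasiewicz- or Markov-type inequality; but (c), the uniform bound on the high derivatives of the interpolant, is exactly where this breaks, because the Lebesgue constants of multivariate Hermite interpolation blow up as nodes coalesce, and a fractal-like $K$ forces nodes to coalesce at every scale.

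A sensible order of attack is therefore to establish the conjecture first in intermediate cases where (a)--(c) are tractable -- $K$ a finite union of graphs of $C^\infty$ functions (adapt the one-dimensional argument along each graph); $K$ subanalytic but not semicoherent (use a global stratification, a tubular neighbourhood of each stratum, and a partial Taylor expansion transverse to it to reduce to a lower-dimensional model); and product sets $K=K_1\times\cdots\times K_d$ with each $K_i\subseteq\R$ (iterate Theorem~\ref{Omega} coordinate by coordinate, the point being to arrange the nodes chosen in the $i$-th variable to depend smoothly on the remaining ones, since the one-variable decomposition is not linear in $f$ and so does not tensor directly) -- and only then to confront the general local problem (a)--(c), whose solvability at every scale would, by the reduction above, prove the conjecture. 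My honest assessment is that the absence of a higher-dimensional Whitney spectral theorem, together with the degeneracy of Hermite interpolation at coalescing nodes, is the true obstruction, and I would not expect a proof in full generality without substantial new input on the geometry of arbitrary compact sets.
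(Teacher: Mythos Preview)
The paper offers no proof of this statement; it is explicitly posed as an open conjecture immediately after Theorem~\ref{Merrien}, preceded only by the remark that the authors know no counterexample. There is therefore nothing in the paper to compare your attempt against.

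You correctly recognise this and present a strategy rather than a proof. Your assessment is accurate and well-aimed: the reduction of \Oma\ to the local approximation problem (a)--(c) via a scaled partition of unity is exactly the mechanism of Theorem~\ref{Omega}, and you have put your finger on precisely why the one-dimensional argument does not transplant --- the dichotomy ``few zeroes $\Rightarrow$ interpolate, many zeroes $\Rightarrow$ take $g_\ell=0$'' rests on Rolle's theorem and on the unisolvence and boundedness of one-variable Hermite interpolation, none of which survives in $\R^d$ for arbitrary node configurations. Your observation that the smooth extension property already forces \Oma\ for $\II_K$ (via complementation in $C^\infty(\R^d)$) is correct and useful, as it disposes of all the positive examples in the paper. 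The suggested intermediate cases (graphs, stratified sets, products) are reasonable first targets; note, though, that even for products $K_1\times\cdots\times K_d$ with $K_i\subseteq\R$ the iteration is not straightforward, since --- as you yourself flag --- the one-variable decomposition $f\mapsto g$ in Theorem~\ref{Omega} is not linear, so one cannot simply tensor. In short: your discussion is a fair summary of where the problem stands, not a proof, and the paper claims no more.
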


%Before finishing this chapter let us draw an immediate consequence of Theorem \ref{Omega} combined with \cite[Theorem 3.10]{Vo} (which states that,
%under the assumptions of the corollary, the space $C^\infty(K)$ satisfies \DN).

%\begin{corollary}
%  If $(x_n)_{n\in\N}$ is a decreasing null sequence such that $x_n-x_{n+1}$ is decreasing and $x_n/x_{n+1}$ is bounded then
%  $K=\{0\}\cup \{x_n:n\in\N\}$ has the smooth extension property.
%\end{corollary}

%It is, of course, enough to require the monotonicity of the differences $x_n-x_{n+1}$ only for large $n$. If $x_n=f(n)$ it is satisfied for decreasing convex functions.
%Elementary calculus thus gives the smooth extension property in the following cases:
%\begin{enumerate}[(a)]
%  \item $x_n=\log(n)^\alpha /n^\beta$ for $\alpha,\beta>0$,
%  \item $x_n= 1/\log(n)^\alpha$ for $\alpha>0$,
%  \item $x_n=\exp(-n^\alpha)$ for $0< \alpha \le 1$.
%\end{enumerate}

%The montonicity of the differences describes a kind of regularity which allowed Vogt to deal with an equivalent system of seminorms on $C^\infty(K)$ which resemble
%the Whitney norms on $\EE(K)$. In the next chapter we will consider more general situations where no such handy description seems available.

\section{A Sufficient condition for (DN)}

In this section we prove a sufficient condition for a compact set $K\subseteq \R$ such that $C^\infty(K)$ has a dominating norm.

Let us recall that for a  Fr\'echet space $X$ with fundamental sequence of seminorms $\|\cdot\|_n$ the $n$-th seminorm is dominating if
\[
  \alle{m\ge n} \gibt{k\ge m, c>0} \alle{x\in X}
\]
\[
  \|x\|^2_m\le c\|x\|_k \|x\|_n.
\]
An equivalent condition is $\gibt{\vartheta \in (0,1)} \alle{m\ge n}\gibt{k\ge m,c>0}$ such that $\|x\|_m\le c\|x\|_k^\vartheta \|x\|_n^{1-\vartheta}$
(the passage from the given $\vartheta$ to $\vartheta=1/2$ is done by iterating the latter condition if $\vartheta >1/2$ and it is trivial for $\vartheta<1/2$)
which is satisfied if (and only if) we have
\[
\gibt{\sigma\ge 1}\alle{m\in\N}\gibt{k\ge m,c>0,\eps_k\in(0,1)} \alle{x\in X,0<\eps<\eps_k}
\]
\[
  \|x\|_m\le c(\eps \|x\|_k + \eps^{-\sigma}\|x\|_n).
\]
Indeed, by increasing the constant we get the inequality for all $\eps\in(0,1)$ and minimizing the right hand side then implies the submultiplicative inequality with
$\vartheta=\sigma/(1+\sigma)$ (and a different constant).

\medskip

Just for convenience, we will slightly modify (a finite number of) the seminorms of $C^\infty(\R)$ from the previous section: Given a compact set $K\subseteq \R$ we
set $\|F\|_n=\sup\{|F^{(j)}(x)|: 0\le j\le n, x\in K\cup [-n,n]\}$.

For a closed ideal with multiplicity function $\mu$ whose zero set $Z(\II)=\{x\in\R:\mu(x)>0\}$ is contained in $K$ all quotient seminorms
\[
  \III f\III_n=\inf\{\|F\|_n: F\text{ represents } f\}
\]
(where, of course, $F$ represents $f$ if $f$ is the equivalence class $F+\II$ of $F$) are in fact norms on $C^\infty(\R)/\II$.

As mentioned in the introduction, theorem \ref{Omega} implies that a closed ideal $\II$ is complemented in $C^\infty(\R)$ if  $C^\infty(\R)/\II$
satisfies \DN, i.e., it has a dominating norm. Using the specific form of closed ideals and a partition of unity one easily sees that the assumption
that $Z(\II)$ is compact is no restriction of generality.

As before, for a closed ideal $\II$ with multiplicity function $\mu$ and $I\subseteq \R$ we call $\sum\limits_{x\in I} \mu(x)$ the number of zeroes of $\II$ in $I$.

It is quite natural to expect that (DN) for the quotient $C^\infty(\R)/\II$ depends on the way the points of
$Z(\II)$ accumulate. The theorem below describes a kind of thickness of $K$ near its points which are not ``very isolated''
expressed in terms of a local {\it Markov equality}.
Several versions of it appeared in the context of Whitney extension operators, e.g.,
in  \cite{PaPl2,BoMi,Fr,FJW1,FJW}. A more geometric condition will be derived afterwards.

\begin{theo}
\label{sufficient}
Let $\II \subseteq C^{\infty}(\R)$ be a closed ideal such that $K=Z(\II)$ is compact. The norm
 $\III\cdot\III_n$ is dominating in $C^{\infty}(\R)/\II$ provided that the following condition holds:
\[
  \gibt{r\ge 1,\gamma\ge 1} \alle{m,k\in\N} \gibt{c>0,\eps_k\in (0,1)} \alle{x\in K,\, 0<\eps<\eps_k}
\]
either $(x-\eps^r,x+\eps^r)$ contains at most $n+1$ zeroes of $\II$ or there is $y\in K\cap (-\eps,\eps)$ such that
$$
|P^{(j)}(y)|\leq \frac{c}{\eps^{\gamma m}} \sup\{|P(t)|: t\in K\cap (y-\eps,y+\eps)\}
$$
for all polynomials $P$ of degree $\leq k$ and all $j\in\{0,\ldots,m\}$.
\end{theo}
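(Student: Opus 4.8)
The plan is to verify the last of the equivalent forms of the dominating-norm property recalled above. Since its right-hand side only grows when $\eps$ is moved away from the optimal value, it suffices, after fixing $\sigma$ once and for all (it will depend only on $r,\gamma$) and, for each $m\ge n$, a suitable $k=k(m)\ge m$, to establish the inequality for every $f\in C^\infty(\R)/\II$ at the single value $\eps=(\III f\III_n/\III f\III_k)^{1/(\sigma+1)}$. Equivalently: fixing representatives $F$ of $f$ with $\|F\|_k\le 2\III f\III_k=:A$ and $G$ with $\|G\|_n\le 2\III f\III_n=:B\;(\le A)$, we must produce a representative $\tilde F$ of $f$ with $\|\tilde F\|_m\le c(\eps A+\eps^{-\sigma}B)$. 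The only information about $f$ we may use is its jet $(f^{(j)}(x))_{x\in K,\,j<\mu(x)}$ on $K$, which by $G$ is bounded by $B$ in all orders $\le n$ and by $F$ is bounded by $A$ in all orders $\le k$.

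For the working scale $\delta$ (a suitable root of $\eps$, so that the Markov loss $\delta^{-\gamma m}$ below is a fixed power of $\eps$) take a partition of unity $\sum_\ell\ro_{\delta^r,\ell}=1$ at scale $\delta^r$ as in the proof of Theorem~\ref{Omega}, with $\supp\ro_{\delta^r,\ell}\subseteq I_\ell$, $|I_\ell|\asymp\delta^r$, $|\ro_{\delta^r,\ell}^{(a)}|\le c_a\delta^{-ra}$, and discard all but the finitely many indices with $I_\ell\cap K\ne\emptyset$. For a retained index choose $x_\ell\in K\cap I_\ell$ (normalizing the scale so $I_\ell\subseteq(x_\ell-\delta^r,x_\ell+\delta^r)$) and invoke the hypothesis at $(x_\ell,\delta)$. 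If $(x_\ell-\delta^r,x_\ell+\delta^r)$ carries at most $n+1$ zeros of $\II$, let $P_\ell$ be the Hermite interpolation polynomial of degree $\le n$ for the jet of $f$ on $K\cap I_\ell$; exactly as in Theorem~\ref{Omega}, writing its coefficients as generalized divided differences $f[x_0,\dots,x_s]=G^{(s)}(\xi_s)/s!$ (legitimate since $G$ carries the given jet and $s\le n$) and using $|I_\ell|\le 2$ gives $|P_\ell^{(b)}|\le cB$ on $I_\ell$ for every $b$. Otherwise the hypothesis yields $y\in K\cap(x_\ell-\delta,x_\ell+\delta)$ with the local Markov inequality, and $P_\ell$ is taken to be the Hermite interpolation polynomial of degree $\le k$ for the jet of $f$ along a well-spread family of at most $k+1$ nodes of $K$ inside $K\cap(y-\delta,y+\delta)$; since $P_\ell$ coincides with $f=G$ at its nodes and, by the Hermite remainder formula and $\|F\|_k\le A$, differs from $f$ by at most $cA\delta^{\kappa}$ (a high power of $\delta$) on the remaining points of $K\cap(y-\delta,y+\delta)$, its supremum over that set is $\le B+cA\delta^{\kappa}$, whence the Markov inequality controls $P_\ell$ together with its derivatives of order $\le m$ on $I_\ell$ by $c\delta^{-\gamma m}(B+A\delta^{\kappa})$.

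In the second case $P_\ell$ matches the jet of $f$ on $K\cap I_\ell$ only up to an error of order $A\delta^{\kappa}$, so $\sum_\ell\ro_{\delta^r,\ell}P_\ell$ is but an approximate representative; one repeats the construction for the much smaller leftover jet along the refined scales $\delta^{r},\delta^{r^2},\dots$ — at each of which more of $K$ becomes isolated and on whose thick part the Markov inequality again turns the smallness of the leftover \emph{values} into smallness of the derivatives of the new polynomials — and sums the corrections into a genuine representative $\tilde F$ of $f$, the series converging in $C^\infty(\R)$ by the geometric decay of the leftovers. Finally $\|\tilde F\|_m$ is estimated by Leibniz' rule with $|\ro_{\delta^r,\ell}^{(a)}|\le c\delta^{-ra}$ and the bounds above: the first-case pieces contribute $\lesssim B\delta^{-rm}$, the second-case pieces $\lesssim\delta^{-rm}\,\delta^{-\gamma m}(B+A\delta^{\kappa})$, and the iteration only inserts a convergent geometric factor. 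Since $\delta$ is a root of $\eps$, these bounds amount to a fixed negative power of $\eps$ times $B$ plus — once $k$, hence $\kappa$, is a large enough multiple of $m$ — a genuine positive power of $\eps$ times $A$; thus $\|\tilde F\|_m\le c(\eps A+\eps^{-\sigma}B)$ with $\sigma=\sigma(r,\gamma)$, which is the \DN-inequality for $C^\infty(\R)/\II$.

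I expect the heart of the matter to be the second case: passing from the Markov inequality — which lives at the single point $y$ and a priori controls only finitely many derivatives — to a uniform bound for $P_\ell$ and all its relevant derivatives over the whole interval $I_\ell$, while keeping $\deg P_\ell\le k$ even where $K$ accumulates inside $I_\ell$. This is precisely what forces $P_\ell$ to interpolate only a bounded, well-spread node set, forces the interpolation error to be tracked through every subsequent estimate, and forces the cascade of ever-finer scales (rather than a single-step construction) together with the observation that on the cascade the Markov inequality only ever needs the $C^0$-smallness of the successive leftovers. The other delicate point is purely arithmetical: the Markov loss $\delta^{-\gamma m}$ does depend on $m$, whereas $\sigma$ must be fixed beforehand, which is why $\delta$ has to be taken as an $m$-th root of $\eps$ (and $k$ to grow linearly with $m$), and one has to check that all the exponents then recombine into a fixed $\sigma$ and a true gain $\eps\,\III f\III_k$ — not into an $m$-dependent exponent or a mere $\eps^{-\text{(something)}}\III f\III_k$.
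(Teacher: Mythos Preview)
Your case (i) and the overall architecture — partition of unity at scale $\eps^r$, verification of the additive form of \DN\ with $\sigma$ depending only on $r,\gamma$, and the final substitution $\eps\mapsto\eps^{1/m}$ — match the paper. The divergence is entirely in case (ii), and there your proposal has a real gap while the paper's argument is both simpler and complete.

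In case (ii) the paper does \emph{not} interpolate: it sets $g_\ell=G$, the global representative with $\|G\|_k\le 2\III f\III_k$. Since $G$ represents $f$ on all of $K$, the sum $\sum_\ell\ro_\ell g_\ell$ is an exact representative and no iteration is needed. To estimate $|(\ro_\ell G)^{(j)}(x)|$ on $I_\ell$ the paper Taylor-expands $G^{(j)}$ around $y$ to order $\tilde m=(r+2)m$; the remainder is $\le c\eps^{\tilde m-m}\|G\|_{\tilde m}$, and the main terms involve $|G^{(\beta)}(y)|$ for $\beta<\tilde m$. These are obtained by applying the Markov hypothesis (with $\tilde m$ in place of $m$) to the \emph{Taylor polynomial} $T_y^kG$, which is a polynomial of degree $\le k$ whose supremum over $K\cap(y-\eps,y+\eps)$ is at most $\III f\III_0+c\eps^k\|G\|_k$ by Taylor's theorem and the fact that $G=f$ on $K$. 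The choice $k=(r+1)m+\gamma\tilde m$ makes all exponents line up, single step.

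Your version builds an interpolation polynomial $P_\ell$ at selected nodes instead. Two things go wrong. First, the Markov hypothesis bounds $|P_\ell^{(j)}(y)|$ only at the single point $y$ and only for $j\le m$; since $I_\ell$ lies at distance up to $2\delta$ (not $\delta^r$) from $y$, this does not propagate to $I_\ell$ — you identify this as ``the heart of the matter'' but never resolve it. (The paper resolves the analogous issue for $G$ via the order-$\tilde m$ Taylor expansion; for your $P_\ell$ you would have to invoke Markov with the full degree in place of $m$, which you do not do and which changes the exponent bookkeeping.) Second, because $P_\ell$ matches the jet of $f$ only at the chosen nodes and not on the possibly infinite set $K\cap I_\ell$ (with possibly unbounded multiplicities $\mu(x)$), $\sum_\ell\ro_{\delta^r,\ell}P_\ell$ fails to represent $f$, forcing your cascade of refined scales. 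But at scale $\delta^{r^j}$ the cutoff and Markov losses are of order $\delta^{-r^{j+1}m}$ and $\delta^{-r^j\gamma m}$, growing super-geometrically, while the ``geometric decay of the leftovers'' you invoke is only in $C^0$ — the higher derivatives of each leftover are still of size $A$, since subtracting a polynomial does not shrink $f^{(k+1)}$. You give no argument that the resulting series converges in $C^m$ with a bound of the form $c(\eps A+\eps^{-\sigma}B)$, and I do not see how to supply one. The paper's device of taking $g_\ell=G$ itself sidesteps both difficulties at once.
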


\begin{proof}

We may assume that $r$ and $\gamma$ are integers. Given $m\ge n$ we set $\tilde m = (r+2)m$ and $k = (r+1)m +\gamma \tilde m$. The condition applied to $\tilde m$ and $k$
responds with a constant $c$ and $\eps_k>0$. We fix $f\in C^\infty(\R)/\II$ and $\eps\in (0,\eps_k)$.

As in the proof of theorem \ref{Omega} the constants below may vary from one occurrence to the other but are always independent of $\eps$, $x\in K$, and $f$.
We take a partition of unity $\ro_1,\ldots,\ro_M$ on $K$, such that
$\supp(\ro_\ell)\subseteq I_\ell = (x_\ell-\eps^r,x_\ell+ \eps^r)$, every $x\in\R$ belongs to at most two $I_\ell$, and
$$
|\ro^{(j)}_{\ell}(x)|\leq c \eps^{-rj}.
$$
We will construct a representative $F=\sum_{\ell=1}^{M}\ro_\ell g_\ell$ of $f$ with suitable $g_\ell$
by distinguishing two cases:
\begin{enumerate}[(i)]
\item If  the number  $N_\ell=\sum\limits_{x\in I_\ell} \mu(x)$ of zeroes in $I_\ell$ is $\le n+1$ we let $g_\ell$ be the  polynomial of degree $N_\ell-1$ interpolating
the  values and derivatives of $f$ up to order $\mu(x)-1$ for all in $x\in I_\ell$
(note that this does not depend on the representative $F_0$, i.e., $f^{(j)}(x) =F_0^{(j)}(x)$ is well-defined for $j < \mu(x)$).

\item Otherwise we choose $G\in C^{\infty}(\R)$ representing $f$ such that $\|G\|_k\leq 2\vertiii{f}_k$,
which is possible since $\vertiii{f}_k$ is the infimum of all such $\|G\|_k$  (of course we only have to deal with the case $\vertiii{f}_k\neq 0$).
We put $g_\ell=G$.
\end{enumerate}

Since all $g_\ell$ represent $f$ in $I_\ell$ and $\sum_{\ell=1}^{M}\ro_\ell=1$ on $K=Z(\II)$  we get that $F=\sum_{\ell=1}^{N}\ro_\ell  g_\ell$ represents $f$.

We will estimate the derivatives up to order $m$ of the terms $\ro_\ell g_\ell$ of $F$.

Case (i) is similar to the proof of theorem \ref{Omega}. We put the $N_\ell$ zeroes of $\II$ in $I_\ell$
in a vector $(x_0,\ldots,x_{N})$ with $N=N_\ell-1$ and write for $x\in I_\ell$
$$
g_\ell(x)=f[x_0]+f[x_0,x_1](x-x_0)+\cdots f[x_0,x_1,\ldots,x_N](x-x_0)\cdots (x-x_{N-1}).
$$
Given any representative $F_0$ of $f$ there are $\xi_s\in I_\ell$ with
$$
f[x_0,x_1,\ldots,x_s]=\frac{F_0^{(s)}(\xi_s)}{s!}
$$
so that $|g_\ell^{(j)}(x)| \le C\|F_0\|_n$ for $x\in I_\ell$ and all $j\le m$ (for $j>n$ the derivative is $0$ because $g_\ell$ is a polynomial of degree $N\le n$).
Combined with Leibniz' rule and the estimates for the derivatives of $\ro_\ell$ we get for $x\in \R$ and $j\le m$
\[
  |(\ro_\ell g_\ell)^{(j)}(x)|\le c \eps^{-rm} \|F_0\|_n.
\]
Passing to the infimum over all representations the last term can be replaced by $c\eps^{-rm} \III f\III_n$.

In case (ii) we choose  a point $y \in K \cap (x_\ell-\eps,x_\ell+\eps)$
where the Markov type inequality is satisfied for derivatives up to order $\tilde m$.

For $j\in \{0,\ldots,m\}$ and $x\in I_\ell$, Taylor's theorem gives
$$
g_{\ell}^{(j)}(x)=G^{(j)}(x)=\sum_{\beta\leq \tilde m-j-1}\frac{G^{(j+\beta)}(y)}{\beta!}(x-y)^{\beta}+\frac{G^{(\tilde m)}(\xi)}{(\tilde m-j)!}(x-y)^{\tilde m-j}
$$
for some $\xi$ between $x$ and $y$. From $|x-y|\leq 2\eps$  we then get, by the same estimate for the derivatives of $\ro_\ell$ as above,
\[
\left| (\ro_\ell g_{\ell})^{(j)}(x)\right|
    \leq c \eps^{-rm}\left(\sup_{0\leq \beta \leq \tilde m-1}|G^{(\beta)}(y)|+\eps^{\tilde m -m} \|G\|_{\tilde m}\right)
\]

Applying the local Markov inequality to the Taylor polynomial $T^k_yG$ of $G$ around $y$ gives for $\beta\le \tilde m \le k$
\begin{align*}
|G^{(\beta)}(y)| & =|(T_{y}^{k}G)^{(\beta)}(y)|\leq c \eps^{-\gamma \tilde m }\sup_{\omega\in K,|y-\omega|<\eps}|(T_y^k G)(\omega)|
\\
& \leq c \eps^{-\gamma \tilde m } \left(\sup_{|y-\omega|<\eps}|(T_y^k G)(\omega)-G(y)|+ \sup_{\omega\in K}|G(\omega)|\right)
\\
& \leq c \eps^{-\gamma \tilde m }  \left( \eps^k \|G\|_k  + \III f\III_0\right)
\end{align*}
because of Taylor's theorem and the fact that all representatives of $f$ coincide on $K$.
Combining both inequalities and using $\tilde m-(r+1)m = m$, $k-\gamma \tilde m -rm = m$, and $\|G\|_{\tilde m} \le \|G\|_k\le 2\III f\III_k$  we get
\begin{align*}
\left| (\ro_\ell g_{\ell})^{(j)}(x)\right| &\le c\left( \eps^{k-\gamma\tilde m -rm}\|G\|_k + \eps^{-\gamma \tilde m-rm}\III f\III_0 +\eps^{\tilde m-m-rm}\|G\|_{\tilde m}\right) \\
& \le c\left( \eps^m \III f\III_k + \eps^{-\gamma \tilde m-rm}\III f\III_0\right).
\end{align*}
By the definition of $\tilde m= (r+2)m$ we get, with $\sigma=\gamma(r+2)+r$, in both cases the inequality
\[
 \left| (\ro_\ell g_{\ell})^{(j)}(x)\right| \le c \left( \eps^m \III f\III_k + \eps^{-\sigma m}\III f \III_0\right).
\]
Summing over $\ell$ and taking the supremum of all $x\in K$ we have thus proved
\[
  \III f\III_m \le \|F\|_m \le c\left( \eps^m \III f\III_k + \eps^{-\sigma m}\III f\III_n\right)
\]
for all $\eps\in(0,\eps_k)$. Replacing $\eps$ by $\eps^{1/m}$ we obtain
\[
  \III f\III_m \le c\left( \eps \III f\III_k + \eps^{-\sigma }\III f\III_n\right)
\]
for all $\eps\in(0,\eps_k^m)$ which proves that $\III \cdot\III_n$ is a dominating norm.
 \end{proof}

It is clear that the Markov type inequality cannot hold for polynomials of degree $k$ if $K\cap (y-\eps,y+\eps)$ has strictly less that $k$ points.
On the other hand, we will show that it is sufficient to find $k$ points in the intersection which are {\it regularly distributed} so that the
minimal distance between two points is comparable to the maximal distance. We thus get a sufficient geometric condition for the smooth extension property
which can be evaluated in concrete cases.

 \begin{theo}
\label{lagrange}
Let $K$ be a compact subset of $\R$ and $n\in \N$ such that
\[
\gibt{r\ge 1,\gamma\ge 1} \alle{ m\in\N,k\in\N} \gibt{c>0,\eps_k>0} \alle{\eps\in (0,\eps_k), x\in K}
\]
whenever $(x-\eps^r,x+\eps^r)$ contains strictly more  than $n+1$ points of $K$ then
\[
\gibt{ y_0,\ldots, y_k\in K\cap (x-\eps,x+\eps)} \text{ with }  \frac{\sup_{0\leq i,\nu\leq k}|y_i-y_\nu|^{k-m}}{\inf_{i\neq \nu}|y_i-y_\nu|^k}\leq \frac{c}{\eps^{\gamma m}}.
\]
Then $K$ has the smooth extension property.
\end{theo}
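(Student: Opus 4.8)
The plan is to deduce Theorem~\ref{lagrange} from Theorem~\ref{sufficient} by verifying that the ``regular distribution'' hypothesis implies the local Markov inequality appearing there. The bridge is the classical Lagrange interpolation formula: if $y_0,\ldots,y_k\in K\cap(x-\eps,x+\eps)$ are the points produced by the hypothesis and $P$ is a polynomial of degree $\le k$, then $P$ equals its own Lagrange interpolant through these $k+1$ nodes, so
\[
P(t)=\sum_{i=0}^k P(y_i)\,\ell_i(t),\qquad \ell_i(t)=\prod_{\nu\neq i}\frac{t-y_\nu}{y_i-y_\nu}.
\]
Writing $\delta=\inf_{\nu\neq\mu}|y_\nu-y_\mu|$ and $D=\sup_{\nu,\mu}|y_\nu-y_\mu|\le 2\eps$, one estimates on the interval $[x-\eps,x+\eps]$: $|\ell_i(t)|\le (2\eps)^k/\delta^k$ and, after differentiating $j\le m$ times and using that $\ell_i$ has degree $k$, $|\ell_i^{(j)}(t)|\le c(k)\,\eps^{k-j}/\delta^k\le c(k)\,\eps^{k-m}/\delta^{k}\le c(k) D^{k-m}/\delta^k$ (absorbing powers of $2$ and of $\eps/D$ into the constant, possibly after noting $D\le 2\eps$). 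The point $y$ of Theorem~\ref{sufficient} will be taken to be $y_0$ (any of the nodes works); then summing over $i$ gives, for $j\in\{0,\ldots,m\}$,
\[
|P^{(j)}(y_0)|\le (k+1)\,c(k)\,\frac{\sup_{\nu,\mu}|y_\nu-y_\mu|^{k-m}}{\inf_{\nu\neq\mu}|y_\nu-y_\mu|^k}\,\sup_{0\le i\le k}|P(y_i)|\le \frac{c'}{\eps^{\gamma m}}\sup\{|P(t)|:t\in K\cap(y_0-\eps,y_0+\eps)\},
\]
where the last step uses exactly the assumed bound on the ratio and the fact that all $y_i$ lie in $K\cap(x-\eps,x+\eps)\subseteq K\cap(y_0-2\eps,y_0+2\eps)$.

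There is one bookkeeping issue to settle: Theorem~\ref{sufficient} asks for the node $y$ to lie in $K\cap(x-\eps,x+\eps)$ and for the supremum to be over $K\cap(y-\eps,y+\eps)$, whereas the interpolation nodes only satisfy $y_i\in K\cap(x-\eps,x+\eps)$, which forces $y_i\in(y_0-2\eps,y_0+2\eps)$ rather than $(y_0-\eps,y_0+\eps)$. This is harmless: one simply applies the hypothesis of Theorem~\ref{lagrange} at scale $\eps/2$ instead of $\eps$ (shrinking $\eps_k$ accordingly and adjusting $c$ by the factor $(1/2)^{-\gamma m}$, which is absorbed into a new $\gamma$ or $c$), so that all $y_i$ lie in $K\cap(x-\eps/2,x+\eps/2)\subseteq K\cap(y_0-\eps,y_0+\eps)$. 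Similarly, the condition ``$(x-\eps^r,x+\eps^r)$ contains at most $n+1$ zeroes of $\II$'' in Theorem~\ref{sufficient} is literally the complement of ``strictly more than $n+1$ points of $K$'' once we recall that here $\II=\II_K$ has multiplicity function $\mu\equiv 1$ on $K$, so zeroes counted with multiplicity coincide with points of $K$; thus in case (i) there is nothing to prove and in case (ii) the interpolation estimate above supplies the required inequality.

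Assembling these pieces: given $K$ and $n$ satisfying the hypothesis of Theorem~\ref{lagrange}, apply the rescaling above to obtain constants $r,\gamma$ (enlarged if necessary) such that for all $m,k$ there are $c,\eps_k$ with: for every $x\in K$ and $\eps\in(0,\eps_k)$, either $(x-\eps^r,x+\eps^r)$ contains at most $n+1$ points of $K$, or there is $y\in K\cap(-\eps,\eps)$... wait --- here one must be careful that Theorem~\ref{sufficient} wants $y\in K\cap(x-\eps,x+\eps)$, which is exactly what $y_0$ gives. Then the local Markov inequality of Theorem~\ref{sufficient} holds with these data, so by that theorem $\III\cdot\III_n$ is a dominating norm on $C^\infty(\R)/\II_K=C^\infty(K)$. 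Combined with Theorem~\ref{Omega}, which guarantees $\II_K\in(\Omega)$, the splitting theorem of Vogt and Wagner (as recalled in the introduction) yields that the sequence $0\to\II_K\to C^\infty(\R)\to C^\infty(K)\to 0$ splits, i.e.\ $K$ has the smooth extension property. I expect no serious obstacle here --- the whole argument is a reduction, and the only mildly delicate point is the constant-chasing in the derivative bound $|\ell_i^{(j)}(t)|\le c(k)\,D^{k-m}/\delta^k$ and making sure the various rescalings of $\eps$ by bounded factors are genuinely absorbed into the quantifier structure of Theorem~\ref{sufficient} rather than breaking it.
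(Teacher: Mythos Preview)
Your proposal is correct and takes essentially the same approach as the paper: express $P$ as its own Lagrange interpolant at the nodes $y_0,\ldots,y_k$, differentiate, bound the result by the ratio appearing in the hypothesis, and thereby verify the local Markov inequality of Theorem~\ref{sufficient} at $y=y_0$. You are in fact more careful than the paper about the $\eps$-versus-$2\eps$ bookkeeping (the paper simply writes the supremum over $K\cap(y_0-\eps,y_0+\eps)$ without addressing that the nodes $y_\nu$ are only guaranteed to lie in $K\cap(x-\eps,x+\eps)$), and your rescaling fix is valid.
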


\begin{proof}
With the quantifiers as above, $x\in K$ such that $K\cap (x-\eps^r,x+\eps^r)$ has more than $n+1$ points,
$y_0,\ldots,y_k$ as above, and a polynomial $P$ of degree $k$ we write it with Lagrange interpolation as
\[
P(t)=\sum_{\nu=0}^{k}P(y_\nu)L_\nu(t), \text{ where }
  L_\nu(t)=\frac{\prod_{i=0,i\neq \nu}^{k}(t-y_i)}{\prod_{i\neq \nu}(y_\nu-y_i)}.
\]
For $j\in\{0,\ldots,m\}$ we then have
\[
  P^{(j)}(t)=\sum_{\nu=0}^{k}P(y_\nu)\sum_{S\subset\{0,\ldots,k\}\setminus\{\nu\},|S|=k-j}\frac{\prod_{i\not\in S}(t-y_i)}{\prod_{i\neq \nu}(y_\nu-y_i)}.
\]
Denoting the quotient in the statement of the proposition by $q$ we thus get for $j\in\{0,\ldots,m\}$
\[
  \left| P^{(j)}(y_0)\right|\leq \sum_{\nu=0}^{n}|P(y_\nu)| c q\leq \frac{c}{\eps^{\gamma m}}\sup_{\omega\in K,\ |\omega-y_0|<\eps}|P(\omega)|.
\]
We have thus verified the required inequalities of theorem \ref{sufficient} for $y=y_0$.
\end{proof}

 It is interesting to note that the conditions of theorems \ref{sufficient} and \ref{lagrange} are both stable under unions, i.e.,
 if it is satisfied for $K$ and $L$ with $n_K$ and $n_L$ then it is also fulfilled for $K\cup L$ with $n=n_K +n_L+1$.
 We do not know however if the smooth extension property is stable under unions.

 \medskip

We will now apply theorem \ref{lagrange} to sets with only one accumulation point of the form $K=\{0\}\cup\{a_\ell:\ell \in\N_0\}$ for a decreasing null
sequence such that the sequence of differences
$d_\ell=a_\ell-a_{\ell+1}$ is decreasing. Since this monotonicity is equivalent to $a_\ell \le (a_{\ell-1}+a_{\ell+1})/2$ such sequences are called convex.
The following proposition improves Vogt's results \cite{Vo} as well as those of Fefferman and Ricci \cite{FeRi}.

\begin{proposition}\label{sequences}
Let $(a_\ell)_{\ell\in\N_0}$ be a decreasing convex null sequences such that, for every $p>1$, the sequence $a_\ell^p/a_{\ell+1}$ is bounded.\\
Then $K=\{0\}\cup\{a_\ell:\ell \in\N_0\}$ has the smooth extension property.
\end{proposition}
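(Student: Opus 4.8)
The plan is to verify the geometric condition of Theorem~\ref{lagrange} with $n=1$, $r=2$ and $\gamma=2$. Thus, given $m,k\in\N$, I must exhibit for all sufficiently small $\eps>0$ and every $x\in K$ for which $(x-\eps^2,x+\eps^2)$ contains more than two points of $K$ an ordered tuple $y_0>\dots>y_k$ in $K\cap(x-\eps,x+\eps)$ whose diameter $D=y_0-y_k$ and minimal gap $\delta=\min_{0\le i<k}(y_i-y_{i+1})$ (the smallest of all pairwise distances, the $y_i$ being ordered) satisfy $D^{k-m}\delta^{-k}\le c\,\eps^{-\gamma m}$; throughout, $c$ denotes a constant that may change from line to line but depends only on $m$ and $k$. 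The first thing I would record is that, writing $M_p=\sup_\ell a_\ell^p/a_{\ell+1}<\infty$ and $L=L(\eps)=\min\{\ell:a_\ell<\eps\}$, one has $a_L\ge a_{L-1}^p/M_p\ge\eps^p/M_p$ for \emph{every} $p>1$: the point of $K$ nearest to $\eps$ from below exceeds every power of $\eps$.

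The principal case is $x=0$, which also covers $x=a_\ell$ with $a_\ell<\eps$ (then $(0,\eps)\subseteq(x-\eps,x+\eps)$). Here I would construct the $y_i$ by greedy halving: $y_0=a_L$, and $y_{i+1}$ the largest $a_j$ with $a_j\le y_i/2$ (possible once $\eps$ is small, as $a_j\to0$). All $y_i$ lie in $(0,a_L]\subseteq(0,\eps)$, so $D\le a_L<\eps$; and $y_{i+1}\le y_i/2$ forces $y_i-y_{i+1}\ge y_i/2$, hence $\delta\ge y_{k-1}/2$. The decisive step is a lower bound for $y_{k-1}$: the predecessor of $y_{i+1}$ exceeds $y_i/2$, so the hypothesis gives $y_{i+1}\ge(y_i/2)^p/M_p$; iterating this from $a_L\ge\eps^q/M_q$ yields $y_{k-1}\ge c\,\eps^{q p^{k-1}}$ with a constant depending on $p,q,k$ only. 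Choosing $p=q$ with $p^k=1+(\gamma-1)m/k=:s$ (legitimate because $\gamma>1$) gives $\delta\ge c\,\eps^{s}$ and, since $sk=k-m+\gamma m$,
\[
D^{k-m}\delta^{-k}\le c\,\eps^{k-m-sk}=c\,\eps^{-\gamma m},
\]
which is exactly the required estimate. (When $k<m$ one bounds $D$ below as well, by $a_L/2\ge\eps^q/(2M_q)$, and takes $p^k=2$; then $D^{-(m-k)}\delta^{-k}\le c\,\eps^{-2m}=c\,\eps^{-\gamma m}$.)

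It remains to treat $x=a_\ell$ with $a_\ell\ge\eps$ in the only non-trivial situation: $(a_\ell-\eps^2,a_\ell+\eps^2)$ contains at least three points of $K$. As $a_\ell\ge\eps>\eps^2$, these form a block of consecutive $a_j$'s through $a_\ell$ of length $<2\eps^2$; hence $d_\ell<2\eps^2$ or $d_{\ell-1}<2\eps^2$, and the monotonicity of $(d_j)$ (convexity of $(a_\ell)$) then gives $d_j<2\eps^2$ for all $j\ge\ell$. Thus $a_\ell>a_{\ell+1}>\dots$ is $2\eps^2$-dense in $(0,a_\ell]$, so some $a_j$ lies within $2\eps^2$ of each target $v_i=a_\ell-(i+1)\eps/(2(k+1))$, $0\le i\le k$. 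Picking $y_i$ to be such an $a_j$, the $y_i$ lie in $(a_\ell-\eps,a_\ell+\eps)$ for $\eps$ small, are ordered, and satisfy $D<\eps$ and $y_i-y_{i+1}>\eps/(2(k+1))-4\eps^2>\eps/(4(k+1))$, so $D^{k-m}\delta^{-k}\le(4(k+1))^k\,\eps^{-m}\le(4(k+1))^k\,\eps^{-\gamma m}$.

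I expect the hard part to be the lower bound $y_{k-1}\ge c\,\eps^{q p^{k-1}}$ on the greedy points. This is the one place where the hypothesis is used in full strength --- ``$a_\ell^p/a_{\ell+1}$ bounded for \emph{every} $p>1$'' --- and it has to be invoked with $p$ close to $1$, so that the exponent $q p^{k-1}$ can be pushed as close to $1$ as the ratio $k/m$ demands. For a sequence decaying faster, where the hypothesis fails (e.g.\ $a_\ell=e^{-2^\ell}$), the greedy points shrink faster than every power of $\eps$ and $D^{k-m}\delta^{-k}$ becomes unbounded --- in keeping with such sets lacking the smooth extension property.
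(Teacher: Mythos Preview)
Your proof is correct and follows the same overall strategy as the paper: verify the geometric condition of Theorem~\ref{lagrange} by treating separately the points $x=a_\ell$ far from the accumulation point $0$ (where convexity gives many nearly equidistant $a_j$ in $(x-\eps,x+\eps)$) and the points near $0$ (where one must invoke the hypothesis $a_\ell^p/a_{\ell+1}$ bounded with $p$ tailored to $m,k$).

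The technical details differ slightly. The paper works with $n=0$ rather than $n=1$ (two points in the small interval already force $d_\ell<\eps^2$ via convexity, so the extra point is not needed). In the far case the paper constructs the $y_i$ by the recursion $y_i=a_{\ell(i)}$ with $\ell(i)=\min\{j:y_{i-1}-a_j\ge\eps/4k\}$, whereas you place targets $v_i$ and approximate; both yield gaps $\asymp\eps/k$. In the near-$0$ case the paper fixes $p$ by $kp^{2k+1}=k+1$, builds the geometric subdivision $t_0=\eps$, $t_{j+1}=\rho t_j^p$, and picks one $a_j$ from every second interval $[t_{2i+1},t_{2i})$; your greedy halving $y_{i+1}=\max\{a_j:a_j\le y_i/2\}$ is a cleaner variant of the same idea. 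In both arguments the crucial step is the same: iterate $a_{j+1}\ge\rho a_j^p$ a bounded number of times to get $\delta\ge c\,\eps^{p^{O(k)}}$, and then choose $p>1$ close enough to $1$ so that this exponent stays within $\gamma m$ of $k-m$.
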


\begin{proof}
  We will verify the condition of theorem \ref{lagrange} for $n=0$, $r=\gamma=2$, and $\eps_k=1/4k$. Let us thus fix $m,k\in\N$, $\eps\in(0,\eps_k)$ and $x=a_\ell\in K$
  such that $(a_\ell-\eps^2,a_\ell+\eps^2)$ contains at least two elements of $K$ so that $d_\ell=a_\ell -a_{\ell+1}<\eps^2<\eps/4k$.

  For the construction of $y_0,\ldots,y_k$ we distinguish two cases depending on whether the limit point $0$ of the sequences belongs to $(a_\ell-\eps,a_\ell+\eps)$.
  If it does not not, i.e., $a_\ell>\eps$, we set $y_0=a_\ell$ and define $y_1,\ldots,y_k$ recursively: If $y_0,\ldots,y_{i-1}$ are already defined we let
  \[
    \ell(i)=\min\{j\in\N: y_{i-1}-a_j \ge \eps/4k\} \text{ and } y_i= a_{\ell(i)}.
  \]
We have to show that $y_1,\ldots,y_k$ are indeed well-defined elements of $K\cap (a_\ell-\eps,a_\ell+\eps)$. Since $d_j$ is decreasing we have $d_j \le d_\ell <\eps/4k$,
and for all $\ell(j)$ which are already defined this implies $y_{j-1}-y_j < \eps/{2k}$. Hence
\[
  y_0-y_{i-1}=(y_0-y_1)+\cdots + (y_{i-2}-y_{i-1}) < (i-1) \eps/2k < \eps/2
\]
which yields $y_{i-1}> y_0-\eps/2 > \eps/2$. Since $a_\ell$ tends to $0$ we get that $\ell(i)$ is well-defined.

By this construction, we have $|y_i-y_\nu|\ge\eps/4k$ for all distinct $i,\nu\in\{0,\ldots,k\}$ as well as $|y_0-y_k|\le \eps/2$.
This implies
\[
   \frac{\sup_{0\leq i,\nu\leq k}|y_i-y_\nu|^{k-m}}{\inf_{i\neq \nu}|y_i-y_\nu|^k}\leq \left(\eps/2\right)^{k-m} \left(4k/\eps\right)^k \le c_k \eps^{-m}.
\]
The condition of theorem \ref{lagrange} is thus satisfied even with $\gamma=1$.

In the second case $0\in (a_\ell-\eps,a_\ell+\eps)$, this neighbourhood contains all terms of the sequence with $a_j<\eps$.
We define $p>1$ by the equation $kp^{2k+1} = k+1$ and get a constant $c\ge 2$ (depending only on $k$) with $a_j^p/a_{j+1} \le c$ for all $j\in \N$.
We thus get $\ro\in (0,1/2)$ with $a_{j+1}\ge \ro a_j^p$. We claim that we then have
\[
  \alle{\delta\in (0,a_0)} \gibt{i\in\N} \text{ with } a_i\in \left[\ro\delta^p,\delta\right).
\]
Indeed, if $i\in\N$ is maximal with $a_{i-1}\ge \delta$ we have $a_i <\delta$ as well as $a_i\ge \ro a_{i-1}^p\ge\ro\delta^p$.

Defining $t_0=\eps$ and $t_{j+1}=\ro t_j^p$, i.e., $t_{j+1}=\ro^{p^j+p^{j-1}+\cdots+1} \eps^{p^{j+1}}$ we get a partion of $(0,\eps)$ into subintervals
$[t_{j+1},t_j)$ each of which containing elements of $K$ (of course, we may assume $\eps<a_0$). We choose $y_j\in K$ from every second interval, i.e.,
$y_j\in [t_{2j+1},t_{2j})$ for $j\in\{0,\ldots,k\}$. For $0\le i < j \le k$ we then have
\[
  y_i-y_j \ge y_{j-1} - y_j \ge t_{2j-1} - t_{2j} =t_{2j-1}-\ro t_{2j-1}^p.
\]
Since $t_{2j-1} < \eps<1$ and $p>1$, this is $\ge (1-\ro) t_{2j-1} \ge (1-\ro) t_{2k-1}$.
The explicit formula for $t_j$ thus gives a constant $\alpha>0$ (depending via $\ro$ and $p$ only on $k$) such that
\[
  y_i-y_j \ge \alpha \eps^{p^{2k-1}}.
\]

%For $j\in\{0,\ldots,k\}$ we apply this to $\delta=\ro^{2j}\eps^{p^{2j}}$ and obtain
%\[
%  y_j=a_{i(j)} \in \left[\ro^{2j+1}\eps^{p^{2j+1}},\ro^{2j}\eps^{p^{2j}}\right).
%\]
%For $0\le i < j < k$ we then have
%\begin{align*}
%y_i-y_j &\ge y_j - y_{j+1} \ge \ro^{2j+1}\eps^{p^{2j+1}} - \ro^{2(j+1)}\eps^{p^{2(j+1)}}
%\\ &= \ro^{2j+1}\eps^{p^{2j+1}}\left(1-\ro \eps^{(p-1)p^{2j+1}}\right) \ge \frac 12 \ro^{2k+1}\eps^{p^{2k+1}}.
%\end{align*}
From this and the choice of $p$ we finally get, for $\gamma\geq 2$
\[
  \frac{\sup_{0\leq i,\nu\leq k}|y_i-y_\nu|^{k-m}}{\inf_{i\neq \nu}|y_i-y_\nu|^k}\leq c\eps^{k-m-kp^{2k+1}} = c\eps^{-(m+1)} \le c\eps^{-\gamma m}. \qedhere
\]
\end{proof}

Below we will show in example \ref{exampleexponential} that we cannot replace the quantifier {\it for all $p>1$} by a fixed $p>1$. 
Monotonicity and convexity of the following examples are easily checked by calculus. It is of course enough to have these properties for large $\ell$.

\begin{example}
The set $K=\{0\}\cup \{a_\ell:\ell\in\N\}$ has the smooth extension property in each of the following cases:
\begin{enumerate}[(1)]
  \item $a_\ell= \log(\ell+1)^\alpha/\ell^\beta$ for $\alpha\in\R$ and $\beta>0$.
  \item $a_\ell= \exp(-\ell^\alpha)$ for $\alpha>0$.
  \item $a_\ell= 1/\log(\ell+1)^\alpha$ for $\alpha>0$.
\end{enumerate}
\end{example}

Vogt's results mentioned in the introduction yield the cases (1) and (2) for $\alpha \le 1$ but they do not cover the other cases. The situations
in (2) and (3) exhibit extremely fast and slow decay, respectively, so that
one is tempted to believe that any set of the form $K=\{0\}\cup \{a_\ell:\ell\in\N\}$ with a decreasing null sequence might have the smooth extension property.
As we will show in the next sections this is not the case.

%Before doing this we would like to mention that theorem \ref{lagrange} and the method of proposition \ref{sequences} yield an improvement of a result of Tidten
%\cite{Ti2} who prove that a compact set $K\subseteq \R$ has the Whitney extension property if it is {\it perfect of class $1$}, where a compact set $K$ is
%defined to be perfect of class $p$ if there are $\eps_0>$ and $c>0$ such that, for exery $x\in K$, there is a sequence $a_\ell \in K$ such that $|x-a_0| \ge \eps_0$,
%$|x-a_\ell|$ decreases to $0$, and $|x-a_\ell|^p \le c |x-a_{\ell+1}|$ for all $\ell\in\N_0$. The second case in the proof of proposition \ref{sequences}
%(which did not use convexity) shows that the condition of theorem \ref{lagrange} is satisfied in every point $x\in K$ if $K$ is perfect of class $p$ for all
%$p>1$. Therefore, $K$ has the smooth extension property.
%Since for perfect subsets of $\R$ the smooth and the Whitney extension property are equivalent we obtain:

%\begin{corollary}
%  Every compact set $K\subseteq \R$ which is perfect of class $p$ for every $p>1$ has the Whitney extension property.
%\end{corollary}

\section{Necessary conditions}
\subsection{A geometric necessary condition}
To obtain a geometric necessary condition we will need a result of Whitney \cite{Wh} describing $C^{\infty}(K)$ for a compact set $K\subseteq \R$
in terms of divided differences. For $f:K\to\R$ and distinct
$x_0,\ldots,x_n$, the divided differences are given by $f[x_0]=f(x_0)$ and
\[
  f[x_0,\ldots,x_n]=\frac{f[x_0,\ldots,x_{n-1}]-f[x_1,\ldots,x_n]}{x_0-x_n}.
\]
We define
\[
  |f|_n=\sup\{|f[x_0,\ldots,x_{j}]|:\ 0\leq j\leq n,\,  x_0,\ldots,x_j \in K \text{ distinct}\}.
\]
Whitney's theorem says that $f:K\to\R$ belongs to $C^{n}(K)$ if and only if the $n$-th divided difference map is uniformly continuous, i.e.,
for all $\eps>0$ there is $\delta>0$  such that for all
$x\in K$, $x_0,\ldots,x_n\in K\cap (x-\delta,x+\delta)$ distinct,
and $y_0,\ldots,y_n\in K\cap (x-\delta,x+\delta)$ distinct we have
\[
  |f[x_0,\ldots,x_n]-f[y_0,\ldots,y_n]|<\eps.
\]
From this it is easy to obtain that $|\cdot|_n$ is a complete norm on $C^n(K)$. Since by Merrien's theorem \ref{Merrien}
$$
C^{\infty}(K)=\bigcap_{n\in\N}C^n(K)
$$
this implies that
the system of norms $\{|\cdot|_n:\ n\in\N\}$ defines the Fr\'echet space topology of $C^{\infty}(K)$.

 \begin{theo}\label{necessarypoint}
 If $C^{\infty}(K)$ has a dominating norm then there exist $n\in\N$, $s\in\N$ such that for all $\eps\in (0,1/2)$, $z\in K$ we have:
 If $(z-\eps^s,z+\eps^s)$ contains at least $n+2$ points of $K$
 then $K\cap (z-\eps,z+\eps)\setminus (z-\eps^s,z+\eps^s)\neq\emptyset$.
 \end{theo}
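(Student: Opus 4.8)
The plan is to argue by contraposition: we assume the geometric conclusion fails and use the divided-difference description of $C^\infty(K)$ to build, for every choice of $n$ and $k$, a function $f\in C^\infty(K)$ violating the dominating-norm inequality $|f|_m \le c|f|_k|f|_n$ (in the submultiplicative form with $m=n+1$, say). So suppose that for every $n$ and every $s$ there are $\eps \in (0,1/2)$ and $z\in K$ such that $(z-\eps^s,z+\eps^s)$ contains at least $n+2$ points of $K$ but $K\cap(z-\eps,z+\eps)$ is entirely contained in $(z-\eps^s,z+\eps^s)$. The picture is a cluster of $n+2$ points, all within a tiny ball of radius $\eps^s$, sitting inside a much larger empty annulus out to radius $\eps$: the points are ``far'' from the rest of $K$ at the scale $\eps$ but ``crowded'' at the scale $\eps^s$.

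The first step is to turn this crowding into a lower bound on an $(n+1)$-st divided difference. Pick $x_0,\dots,x_{n+1}\in K\cap(z-\eps^s,z+\eps^s)$ distinct; these are $n+2$ points spanning a set of diameter $\le 2\eps^s$. For the test function I would take $f$ supported (in the sense of being nonzero) only on the cluster, e.g. $f$ equal to a suitable polynomial of degree exactly $n+1$ on the $n+2$ cluster points and equal to $0$ on all of $K$ outside $(z-\eps^s,z+\eps^s)$ — this is a legitimate element of $C^\infty(K)$ because a function that is a polynomial on a finite set and $0$ elsewhere on $K$ extends smoothly (use a bump cutoff times the polynomial). Normalizing the polynomial so that $f[x_0,\dots,x_{n+1}] = 1$, and recalling that this $(n+1)$-st divided difference scales like (leading coefficient), the function values on the cluster are then of size $\sim \eps^{s(n+1)}$ at most, but rescaling is harmless; the point is to get $|f|_{n+1}\ge 1$ while controlling $|f|_n$ and $|f|_k$ from above.

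The second step is to bound $|f|_n$ from above by a negative power of $\eps$ coming only from the \emph{large} scale. Any $n$-th divided difference $f[y_0,\dots,y_j]$ with $j\le n$ either involves only cluster points — in which case it is at most $\sim$ (values of $f$)$/(\text{min gap})^j$, and choosing $f$ to be, say, the Chebyshev-type polynomial on the cluster keeps this bounded by a constant — or it involves at least one point $y$ with $f(y)=0$ lying outside $(z-\eps,z+\eps)$ or in the far part, in which case the denominators contain a factor $\ge \eps^s$ but actually, more usefully, at least one difference $y_i - y_\nu \ge \eps - \eps^s \ge \eps/2$. The crucial leverage is: to reach from the cluster to a zero of $f$ you must cross the empty annulus, which costs a denominator of size $\ge \eps/2$, so $|f|_n \le c\,\eps^{-s n}$ or better; and similarly $|f|_k \le c\,\eps^{-sk}$. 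Feeding $|f|_{n+1}\ge 1$, $|f|_n \le c\eps^{-sn}$, $|f|_k\le c\eps^{-sk}$ into $|f|_{n+1}^2 \le c|f|_k|f|_n$ gives $1 \le c\,\eps^{-s(n+k)}$, which is \emph{not} yet a contradiction — so the argument needs refinement: one must exploit that the normalization forces $f$ to be genuinely \emph{small} (of size a high power of $\eps^s$) in sup-norm, so that $|f|_n$ and $|f|_k$ are actually dominated by $\eps^s$ to a \emph{positive} power, while $|f|_{n+1}$ stays $\gtrsim 1$. Then the submultiplicative inequality fails as $s\to\infty$.

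The main obstacle — and the place where care is needed — is precisely this bookkeeping of exponents: one must choose the test polynomial on the cluster (degree $n+1$, with $f[x_0,\dots,x_{n+1}]$ normalized) so that $|f|_{n+1}$ is bounded \emph{below} by a constant while $|f|_n$ is bounded \emph{above} by a power of $\eps$ that beats the $\eps^{-s}$-type blow-up of the far-away divided differences, and simultaneously $|f|_k$ does not blow up faster than $|f|_n$ allows. This is a delicate but elementary estimate: divided differences of a polynomial of degree $\le n$ over $>n+1$ points vanish, which kills the dangerous high-order terms, and the geometry (gap $\ge \eps/2$ to cross the annulus, diameter $\le 2\eps^s$ inside) provides exactly the separation of scales that makes the two bounds compatible. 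Once the exponents are lined up, letting $s$ grow forces the quotient $|f|_{n+1}^2 / (|f|_k|f|_n)$ to infinity, contradicting (DN), which proves the theorem.
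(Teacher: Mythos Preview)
Your overall strategy matches the paper's proof exactly: contraposition, a test function $f=\varphi P$ where $P(x)=\prod_{j=0}^{n}(x-x_j)$ is the monic polynomial of degree $n+1$ with roots at $n+1$ of the cluster points and $\varphi$ is a bump supported in $(z-\eps,z+\eps)$ and equal to $1$ on $(z-\eps^s,z+\eps^s)$, then a comparison of $|f|_{n+1}$, $|f|_n$, $|f|_k$.

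However, your exponent bookkeeping contains a genuine error. You write that the refinement should make \emph{both} $|f|_n$ and $|f|_k$ ``dominated by $\eps^s$ to a positive power''. This is impossible: since $k\ge n+1$ and the norms are increasing, $|f|_k\ge |f|_{n+1}\ge 1$, so $|f|_k$ can never be small. The correct balance, which the paper establishes, is
\[
|f|_{n+1}\ge 1,\qquad |f|_k\le c\,\eps^{-k},\qquad |f|_n\le c\,\eps^{\,s-2n},
\]
and only the last one tends to $0$ as $s\to\infty$. Plugging into $|f|_{n+1}^2\le c|f|_k|f|_n$ gives $1\le c\,\eps^{s-2n-k}$, a contradiction for $s$ large.

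The point you are missing for $|f|_n$ is not ``Chebyshev-type'' optimization but simply that $P$ has \emph{all} its $n+1$ roots inside the cluster: for $\ell\le n$ and $\xi$ in the cluster, each term of $P^{(\ell)}(\xi)=\sum_{|S|=n+1-\ell}\prod_{j\in S}(\xi-x_j)$ retains at least one factor $|\xi-x_j|\le 2\eps^s$, so $|P^{(\ell)}(\xi)|\le c\,\eps^s$. To pass from ``all nodes in the cluster'' to arbitrary nodes one uses the Leibniz rule for divided differences applied to $f=\varphi\cdot f$ (not the recursion you sketch); the $\varphi$-factors contribute at most $c\,\eps^{-n}$ each, and two applications (one to strip nodes on the left of the cluster, one on the right) give the $\eps^{-2n}$. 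For $|f|_k$ one does \emph{not} argue via minimum gaps: the mean-value representation $f[y_0,\dots,y_j]=f^{(j)}(\xi)/j!$ together with the product rule for $\varphi P$ and $|x-x_j|\le 1$ gives $|f|_k\le c\,\eps^{-k}$ directly, with no dependence on $s$. Your observation that higher divided differences of a polynomial of degree $\le n$ vanish is true but plays no role here.
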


\begin{proof}
Let $|\cdot|_n$ be a dominating norm on $C^{\infty}(K)$ and  $m=n+1$. We take $k>m$ and $c>0$ from the \DN-condition in submultiplicative form, i.e.,
\[
|f|^2_{n+1}\leq c|f|_{k}|f|_{n} \text{ for all } f\in C^{\infty}(K).
\]
Assume that, for all $s\ge 2$, there exist $z=x_0 \in K$ and $x_1,\ldots,x_{n+1}\in (x_0-\eps^s,x_0+\eps^s)$ so that
$K\cap (x_0-\eps,x_0+\eps)\setminus (x_0-\eps^s,x_0+\eps^s)=\emptyset$
(the dependence on $s$ is notationally suppressed).

We take $\varphi\in\DD(\R)$ with
$\supp(\varphi)\subset (x_0-\eps,x_0+\eps)$, $\varphi=1$ on the small interval $(x_0-\eps^s,x_0+\eps^s)$, and $|\varphi^{(j)}|\leq c_j\eps^{-j}$, where $c_j$ are
absolute constants. Let $P(x)=\prod_{j=0}^{n}(x-x_j)$ and $f=\varphi P$. Then
\[
  |f|_{n+1}\geq |P[x_0,\ldots,x_{n+1}]|=\left| \frac{P^{(n+1)}(\xi)}{(n+1)!}\right|=1.
  \]
From $f[x_0,\ldots,x_j]=f^{(j)}(\xi)/j!$ we further get
\begin{align*}
  |f|_k & \leq \sup\{|f^{(j)}(\xi)|:\ 0\leq j\leq k,\ \xi\in \R\}
  \\
\leq & \frac{c_k}{\eps^k} \sup_{x\in (x_0-\eps,x_0+\eps)}\sup_{0\leq j\leq k} \left|\sum_{|S|=n-j}\prod_{\ell \in S}(x-x_\ell)\right| \leq \frac{\tilde{c}}{\eps^k},
\end{align*}
the last estimate comes from $|x-x_\ell|\leq 1$ for each $x\in (x_0-\eps,x_0+\eps)$ so that $\tilde c$ only depends on $k$.

To estimate $|f|_n$ let us take distinct point $y_0,\ldots,y_n$ in $K$. Because of the symmetry of the divided differences we can assume that
$y_0 <y_1 <\cdots<y_n$. Leibniz' rule for the product $f=\varphi f$ says
\begin{align*}
  f[y_0,\ldots,y_n] &= \varphi[y_0]f[y_0,\ldots,y_n] + \varphi[y_0,y_1]f[y_1,\ldots,y_n]  \\ & +  \varphi[y_0,y_1,y_2]f[y_2,\ldots,y_n] + \cdots
\end{align*}
If $a$ is the first index with $y_a >x_0-\eps^s$ then the first $a-1$ terms of this sum vanish because $y_{a-1}$ is outside $(x_0-\eps,x_0+\eps)$ so that
$\varphi[y_0,\ldots,y_{a-1}]=0$. Estimating $|\varphi[y_0,\ldots,y_j]|= |\varphi^{(j)}(\xi_j)/j!| \le c_j \eps^{-j}$ we get
\begin{align*}
  |f[y_0,\ldots,y_n]|\le n \frac{c_n}{\eps^n} \sup\{|f[z_0,\ldots,z_j]|: z_0,\ldots,z_j \in K\cap (x_0-\eps^s,\infty),\ j\leq n\}.
\end{align*}
In the same way we estimate $|f[z_0,\ldots,z_n]|$ by $c_n\eps^{-n}$ times divided differences with nodes in $K\cap (x_0-\eps^s,x_0+\eps^s)$.
Since $f=P$ in $K\cap (x_0-\eps^s,x_0+\eps^s)$ this yields
\begin{align*}
|f|_n&\leq \frac{c_n}{\eps^{2n}} \sup\{|f[y_0,\ldots,y_\ell]|:\ 0\leq \ell \leq n,\, y_j\in  K\cap (z-\eps^s,z+\eps^s)\}\\
&= \frac{c_n}{\eps^{2n}} \sup\{|P[y_0,\ldots,y_\ell]|:\ 0\leq \ell \leq n,\, y_j\in  K\cap (z-\eps^s,z+\eps^s)\}\\
&\leq \frac{c_n}{\eps^{2n}} \sup\{|P^{(\ell)}(\xi)|:\ 0\leq \ell\leq n, \xi\in (z-\eps^s,z+\eps^s)\} \leq \frac{\tilde c_n}{\eps^{2n}}  \eps^{s}
\end{align*}
where $\tilde{c}$ is another constant which only depends on $n$. Taking the \DN-inequality together with the estimates obtained for $|f|_{n+1}$, $|f|_n$ and $|f|_k$,
we get for some constant $c$ which is independent of $s$ that
\[
1\leq c \eps^{s-2n-k}.
\]
For $s\to\infty$ this is impossible.
\end{proof}

\begin{example}\label{almostaccumulation}
The set $K=\{0\}\cup\{\frac1k+je^{-k}:\ 0\leq j \leq k,\ k\in\N\}$ does not have the smooth extension property.
\end{example}

\subsection{A necessary  Markov type inequality.}

We fix $\varphi\in \DD(\R)$ such that $\supp(\varphi)\subseteq [-1,1]$ and $\varphi=1$ in $[-1/2,1/2]$ and write,
$\varphi_{\eps,y}(x)=\varphi(\frac{x-y}{\eps})$ for $y\in \R$ and $\eps>0$.

\begin{proposition}
\label{necessarycondition}
If $\vertiii{\cdot}_n$ is a dominating norm on $C^{\infty}(K)$ then the following holds:

\noindent
$
\alle{m\in\N} \gibt{r\geq 1} \alle{k \in\N} \gibt{c_k>0} \text{ such that }
$
for all polynomials $P$ of degree $\le k$, $\eps>0$, accumulation points $y$ of $K$, and
$f\in C^{\infty}(\R)$ with $\supp(f)\subseteq (y-\eps,y+\eps)$ and
$f = \varphi_{\eps,y}P$ on $K$ we have
\[
|P^{(m)}(y)|^2\leq \frac{c_k}{\eps^r} \sup\limits_{|t-y|<\eps}|P(t)|\sup\limits_{|t-y|<\eps,\ j\leq n}|f^{(j)}(t)|\]

\end{proposition}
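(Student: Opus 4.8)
The plan is to apply the dominating‑norm inequality for $\vertiii{\cdot}_n$ to the single element $g=f|_K\in C^\infty(K)$ and to read off the three factors that appear. Write $B=\sup\{|f^{(j)}(t)|:|t-y|<\eps,\ j\le n\}$ for the second supremum on the right‑hand side. Since $\supp f\subseteq(y-\eps,y+\eps)$, the (modified, as in \S3) norm $\|f\|_n=\sup\{|f^{(j)}(x)|:j\le n,\ x\in K\cup[-n,n]\}$ equals $\sup\{|f^{(j)}(x)|:j\le n,\ x\in(K\cup[-n,n])\cap(y-\eps,y+\eps)\}\le B$, and since $f$ is a representative of $g$ this gives $\vertiii{g}_n\le B$. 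We may restrict to $0<\eps<1$, which is the only relevant range.

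If $m\le n$ the assertion is elementary and needs neither $\DN$ nor the accumulation point beyond the following: Markov's inequality for polynomials of degree $\le k$ on the interval $(y-\eps,y+\eps)$ gives $|P^{(m)}(y)|\le c_k\,\eps^{-m}\sup_{|t-y|<\eps}|P(t)|$; on the other hand $f=\varphi_{\eps,y}P=P$ on $K\cap(y-\eps/2,y+\eps/2)$, a set accumulating at $y$, so by a Taylor expansion $(f-P)^{(j)}(y)=0$ for every $j$, whence $|P^{(m)}(y)|=|f^{(m)}(y)|\le B$. Multiplying the two bounds gives the assertion with $r=m$.

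Now assume $m>n$. Applying the $\DN$‑condition (in submultiplicative form) with this $m$ ($\ge n$) yields $k_0\ge m$ and $c_0>0$ with $\vertiii{h}_m^2\le c_0\,\vertiii{h}_{k_0}\,\vertiii{h}_n$ for all $h\in C^\infty(K)$; we set $r=k_0$, which depends only on $m$. Fix $k\in\N$ and a polynomial $P$ with $\deg P\le k$. The key point is the lower estimate $\vertiii{g}_m\ge|P^{(m)}(y)|$: any representative $F\in C^\infty(\R)$ of $g$ coincides with $\varphi_{\eps,y}P=P$ on $K\cap(y-\eps/2,y+\eps/2)$, a set accumulating at $y$, so as above $(F-P)^{(j)}(y)=0$ for all $j$; in particular $|F^{(m)}(y)|=|P^{(m)}(y)|$, and since $y\in K$ we get $\|F\|_m\ge|P^{(m)}(y)|$. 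Passing to the infimum over $F$ gives the bound. For the middle factor we use the concrete representative $\varphi_{\eps,y}P$ of $g$: by Leibniz' rule, the bounds $|\varphi_{\eps,y}^{(i)}|\le c\,\eps^{-i}$, Markov's inequality $|P^{(\ell)}(t)|\le c_k\,\eps^{-\ell}\sup_{|s-y|<\eps}|P(s)|$ for $|t-y|<\eps$, together with $\supp(\varphi_{\eps,y}P)\subseteq(y-\eps,y+\eps)$ and $\eps<1$, we obtain $\vertiii{g}_{k_0}\le\|\varphi_{\eps,y}P\|_{k_0}\le c_k\,\eps^{-k_0}\sup_{|t-y|<\eps}|P(t)|$. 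Feeding the three estimates into the $\DN$‑inequality,
\[
|P^{(m)}(y)|^2\le\vertiii{g}_m^2\le c_0\,\vertiii{g}_{k_0}\,\vertiii{g}_n\le c_k'\,\eps^{-k_0}\,\sup_{|t-y|<\eps}|P(t)|\;\sup_{|t-y|<\eps,\ j\le n}|f^{(j)}(t)|,
\]
with $c_k'=c_0c_k$, which is the asserted inequality with $r=k_0$.

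The main obstacle — indeed the only step that is not routine Leibniz/Markov bookkeeping together with the bare definition of the quotient norms — is the lower bound $\vertiii{g}_m\ge|P^{(m)}(y)|$. It rests on the classical fact, the same one underlying Whitney's description of $C^\infty(K)$ recalled above, that a smooth function is determined to infinite order at an accumulation point of $K$ by its restriction to $K$. Once this is isolated, choosing $r=k_0$ (the $\DN$‑index attached to $m$) and letting $c_k$ absorb the $k$‑dependent Markov constants makes all the quantifiers line up, and restricting to $0<\eps<1$ removes the only remaining friction.
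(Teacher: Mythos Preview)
Your proof is correct and follows essentially the same route as the paper: apply the \DN-inequality to $g=f|_K=\varphi_{\eps,y}P|_K$, bound $\vertiii{g}_m$ from below by $|P^{(m)}(y)|$ using that $y$ is an accumulation point, bound $\vertiii{g}_n$ by $\|f\|_n$, and estimate $\vertiii{g}_{k_0}\le\|\varphi_{\eps,y}P\|_{k_0}$ via Leibniz and Markov. You are slightly more careful than the paper in two respects---you restrict to $\eps<1$ (needed so that $\max_{j\le k_0}\eps^{-j}=\eps^{-k_0}$) and you handle the case $m\le n$ separately (which the paper tacitly ignores, as its \DN-formulation is stated only for $m\ge n$)---but these are just clarifications, not a different argument.
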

\begin{proof}
The  \DN condition implies  that,
for all $ m\geq n$, there are $r\geq m$ and $C>0$ such that for all $h\in C^{\infty}(K)$
$$
\vertiii{h}_m^2\leq C\vertiii{h}_{r}\vertiii{h}_n.
$$
Given the situation from the proposition we set $h =f |_K =\varphi_{\eps,y}P|_K$.
Since $y$ is an accumulation point of $K$ we have $g^{(m)}(y)=P^{(m)}(y)$ for all $g\in C^\infty(\R)$ satisfying  $g|_K=h$. Hence
$$
|P^{(m)}(y)|^2\leq \vertiii{h}_m^2\leq C\|\varphi_{\eps,y}P\|_{r}\|f\|_{n}.
$$
It remains to combine the Leibniz rule for $\varphi_{\eps,y}P$ with the estimate $|\varphi_{\eps,y}^{(i)}|\le c /\eps^i$ and the classical Markov inequality
\[
\sup\limits_{x\in [y-\eps,y+\eps]}|P^{(\ell)}(x)|\leq \frac{c}{\eps^\ell}\sup\limits_{t\in [y-\eps,y+\eps]}|P(t)|. \qedhere
\]
\end{proof}

The density $I_{K}^{2n+1}\subseteq \overline{I_K^{\infty}}^{C^n(\R)}$ (see the remarks after the proof of theorem
\ref{Omega}) allows us to write the $n$-th norm of $h\in C^{\infty}(K)$ as
$$
\vertiii{h}_n=\inf\{\|f\|_n:\ f\in C^{2n+1}(\R),\, f=h \text{ on } K \}.
$$
In the situation of the previous proposition we can thus replace $f\in C^\infty(\R)$ by $f\in C^{2n+1}(\R)$.

We now consider $K=\{0\}\cup \{a_\ell:\ \ell\in\N\}$ with $a_\ell \to 0$
to get examples where $C^{\infty}(K)$ does not satisfy  the condition in proposition \ref{necessarycondition}.

\begin{proposition}
\label{necessarysequencesdn}
Let $K=\{0\}\cup \{a_\ell: \ell\in\N\}$ with a null-sequence such that $(|a_\ell|)_{\ell\in\N}$ decreases and $|a_\ell| <|a_{\ell-1}|/2$.
If $\vertiii{\cdot}_n$ is a dominating norm for $C^{\infty}(K)$ then, for all $s\in\N$, there is $r\ge 1$ such that, for all $k\geq s$, there exists $C_k>0$ such that for all $d\in\N$
$$
\prod_{j=1}^{k-s}|a_{d+j}|^{4(2n+2)}\leq \frac{C_k}{|a_d|^r}|a_{d}|^{2k(2n+2)} |a_{d+k}|^{3n+4}.
$$

\noindent (For $k=s$ the empty set in the product of the left hand side is  1.)
\end{proposition}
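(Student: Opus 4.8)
The plan is to derive the inequality from Proposition \ref{necessarycondition}, applied at the unique accumulation point $y=0$ of $K$; this is the only place the hypothesis that $\vertiii{\cdot}_n$ dominates will enter. Before that I would make two reductions. First, since $\prod_{j=1}^{k-s'}|a_{d+j}|\ge\prod_{j=1}^{k-s}|a_{d+j}|$ whenever $s'\le s$, the asserted inequality for a value $s$ implies it (with the same $r$) for every smaller value, so it suffices to treat $s\ge n+1$. Second, I would apply Proposition \ref{necessarycondition} with the derivative order $m$ equal to $s$; this already produces the quantifier pattern ``$\forall s\ \exists r\ \forall k\ \exists C_k$'', and — crucially — the inequality $s>n$ will let me use a polynomial vanishing to order $s$ at $0$, whose extension is then flat to order $n$ at the origin, so that its $n$-th norm is \emph{not} forced to be large by its behaviour near $0$.

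Fix $s\ge n+1$, $k\ge s$ and $d\in\N$, and set $P(x)=x^{s}\prod_{j=1}^{k-s}(x-a_{d+j})$, a polynomial of degree $k$ vanishing to order $s$ at $0$ and simply at $a_{d+1},\dots,a_{d+k-s}$. Differentiating, $P^{(s)}(0)=s!\prod_{j=1}^{k-s}(-a_{d+j})$, so $|P^{(s)}(0)|=s!\prod_{j=1}^{k-s}|a_{d+j}|$. For the radius I would take $\eps=\max\{2|a_{d+1}|,\tfrac12|a_d|\}$, which by the lacunarity $|a_{d+1}|<\tfrac12|a_d|$ satisfies $\tfrac12|a_d|\le\eps<|a_d|$; then $K\cap(-\eps,\eps)=\{0\}\cup\{a_\ell:\ell\ge d+1\}$, each such point lies in $[-\tfrac12\eps,\tfrac12\eps]$, so $\varphi_{\eps,0}\equiv1$ there and $\varphi_{\eps,0}P=P$ on $K\cap(-\eps,\eps)$; moreover, since $\eps$ exceeds every root of $P$, $\sup_{|t|<\eps}|P(t)|\le(2\eps)^{k}\le c_k|a_d|^{k}$.

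The heart of the argument is the construction of an extension $f\in C^{\infty}(\R)$ with $\supp f\subseteq(-\eps,\eps)$, $f|_K=\varphi_{\eps,0}P|_K$, and $\|f\|_n$ as small as the lacunarity of $K$ permits. Here the lacunarity $|a_\ell|<\tfrac12|a_{\ell-1}|$ would be used repeatedly: around each simple root $a_{d+j}$ ($1\le j\le k-s$) it gives an interval of length $\gtrsim|a_{d+j}|$ meeting $K$ only in $a_{d+j}$, on which $f$ may be taken identically $0$; on a neighbourhood of $0$ reaching up to (just below) the smallest root — which together with $0$ contains all remaining points $a_\ell$, $\ell>d+k-s$, where $\varphi_{\eps,0}P=P$ — one simply sets $f=P$, which interpolates the data automatically; and in the finitely many transition zones, each of length $\gtrsim$ its local scale, one glues smoothly with a partition of unity. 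Estimating the derivatives of $f$ piecewise by Leibniz' rule, using $|(x^{s})^{(i)}|\lesssim_s|x|^{s-i}$ together with bounds of the form $|g^{(i)}(x)|\lesssim_k|a_{d+k}|^{-i}\prod_{j=1}^{k-s}|a_{d+j}|$ on $g=\prod_{j=1}^{k-s}(x-a_{d+j})$ near $0$, and the fact that $n<s$ (so every $x^{s}$-derivative that occurs still carries a positive power of $|x|$), one obtains a bound of the shape
\[
\|f\|_n\ \le\ c_{s,k}\,|a_{d+k}|^{\,s-n}\prod_{j=1}^{k-s}|a_{d+j}| ,
\]
carrying a \emph{positive} power of the small scale; the essential point is that no $|a_{d+j}|$ appears with a negative power.

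Finally I would substitute these three estimates into the inequality $|P^{(s)}(0)|^{2}\le\frac{c_k}{\eps^{r}}\,\sup_{|t|<\eps}|P|\cdot\sup_{|t|<\eps,\,j\le n}|f^{(j)}|$ of Proposition \ref{necessarycondition}. Cancelling one factor $\prod_{j=1}^{k-s}|a_{d+j}|$ and writing $|a_d|$ for $\eps$ up to constants yields $\prod_{j=1}^{k-s}|a_{d+j}|\le c_{s,k}\,|a_d|^{\,k-r_0}\,|a_{d+k}|^{\,s-n}$. Raising this to the power $4(2n+2)=8(n+1)$, using $8(n+1)(s-n)\ge 8(n+1)\ge 3n+4$ (valid since $s\ge n+1$) together with $|a_{d+k}|\le 1$ to replace $|a_{d+k}|^{8(n+1)(s-n)}$ by $|a_{d+k}|^{3n+4}$, and collecting the surviving $|a_d|$-powers and the constants, one reaches exactly the asserted inequality — the final $r$ absorbing $8(n+1)r_0$, the slack in the $|a_d|$-exponent and the $\eps^{-r}$-term, and $C_k$ absorbing $c_{s,k}$.

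I expect the main obstacle to be the third step: verifying that the lacunarity really leaves room everywhere for the smooth interpolations, that the piecewise derivative estimates for $f$ combine to a bound with a positive power of the small scale and with no leftover intermediate scale $|a_{d+j}|$ ($1\le j<k$), and that the required smoothness is respected at every gluing. Once $\|f\|_n$ is controlled, the remaining exponent bookkeeping, though tedious, is routine.
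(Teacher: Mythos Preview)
Your overall plan and the exponent bookkeeping at the end are sound, but the key estimate $\|f\|_n\le c_{s,k}\,|a_{d+k}|^{\,s-n}\prod_{j=1}^{k-s}|a_{d+j}|$ fails for the polynomial you chose whenever $s\ge 2$. The smallest nonzero root of $P(x)=x^{s}\prod_{j=1}^{k-s}(x-a_{d+j})$ is $a_{d+k-s}$, so the outermost point of $K$ at which every admissible $f$ must take a \emph{nonzero} value is $a_{d+k-s+1}$, not $a_{d+k}$. There
\[
|f(a_{d+k-s+1})|=|a_{d+k-s+1}|^{s}\prod_{j=1}^{k-s}|a_{d+k-s+1}-a_{d+j}|\ \ge\ 2^{-(k-s)}\,|a_{d+k-s+1}|^{s}\prod_{j=1}^{k-s}|a_{d+j}|,
\]
hence already $\|f\|_0$ is at least a constant times $|a_{d+k-s+1}|^{s}\prod_{j=1}^{k-s}|a_{d+j}|$. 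Your bound would therefore force $|a_{d+k-s+1}|^{s}\le C\,|a_{d+k}|^{\,s-n}$ uniformly in $d$, which the lacunarity hypothesis $|a_\ell|<|a_{\ell-1}|/2$ alone does not provide (take $a_\ell=\exp(-p^\ell)$ with $p$ large). In your Leibniz estimate you are tacitly replacing $|x|$ by $|a_{d+k}|$, but the region on which $f$ cannot vanish reaches out to $|a_{d+k-s+1}|$ --- for $s\ge2$ a genuinely larger scale --- and your reduction to $s\ge n+1$ forces $s\ge2$ whenever $n\ge1$. The index $d+k$ simply does not occur anywhere in your construction, neither as a root of $P$ nor as the boundary of a region.

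The paper's proof sidesteps this with a very different polynomial, $P(x)=\prod_{j=1}^{k}(x^{2}-a_{d+j}^{2})^{2n+2}$ of degree $2k(2n+2)$, having zeros of multiplicity $2n+2$ at \emph{all} of $\pm a_{d+1},\dots,\pm a_{d+k}$. Because $\pm a_{d+k}$ are zeros of order $2n+2$, the blunt truncation $f=P$ on $[-|a_{d+k}|,|a_{d+k}|]$ and $f=0$ outside is already $C^{2n+1}$ with no gluing, and on its support every derivative of order $\le n$ retains at least $3n+4$ of the factors $(x\pm a_{d+k})$, yielding $\|f\|_n\le c_k|a_{d+k}|^{3n+4}$ directly. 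The high multiplicities, together with the roots extending all the way down to $a_{d+k}$, are what tie the small scale to $|a_{d+k}|$; with simple roots stopping at $a_{d+k-s}$ this cannot be achieved.
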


\begin{proof}
Specifying in proposition \ref{necessarycondition} $m=2s(2n+2)$ and this concrete $K$ we  get:
If $\vertiii{\cdot}_n$ is a dominating norm on $C^{\infty}(K)$ then the following holds:

\noindent
$
\alle{s\in\N} \gibt{r\geq 1} \alle{k\in\N,\ k\geq s} \gibt{c_k>0} \text{ such that }
$
for all polynomials $P$ of degree $\le k$, $\eps>0$, and
$f\in C^{2n+1}(\R)$ with $\supp(f)\subseteq (-\eps,\eps)$ and
$f = \varphi_{\eps,0}P$ on $K$ we have

\begin{equation}
\label{necessarydn} \tag{$\ast$}
|P^{(2s(2n+2))}(0)|^2\leq \frac{c_k}{\eps^r} \sup_{|t-y|<\eps}|P(t)| \sup_{t\in\R,\ j\leq n}|f^{(j)}(t)|.
\end{equation}

For fixed $d\in\N$ we abbreviate $x_j=a_{d+j}$ with the quantifiers from the proposition. We consider $\eps=|x_0|$ and the polynomial
$$
P(x)=\prod_{j=1}^{k}(x^2-x_j^2)^{2n+2}
$$
of degree $\tilde k=2k(2n+2)$. Since $\pm x_k$ are zeroes of order $2n+2$ of $P$ the function
$$
f(x)=\left\{\begin{array}{ll} P(x)&\abs{x}\leq |x_k|\\ 0& \mbox{otherwise}\end{array}\right.,
$$
is in $C^{2n+1}(\R)$.
Moreover $f=\varphi_{\eps,0}P$ on $K$ because $\varphi_{\eps,0}(x)=1$ for $|x|\le |x_k|$ (as $|x_k|\le |x_0|/2=\eps/2$) and, for all other $x\in K\cap \supp(\varphi_{\eps,0})$,
we have $x\in\{x_1,\ldots,x_k\}$ so that $f(x)=P(x)=0$.

In order to apply (\ref{necessarydn}) to $f$ we will show the following inequalities:
\begin{itemize}
\item[($\alpha$)] For $m=2s(2n+2)$ we have
$\displaystyle |P^{(m)}(0)|^2\geq \prod_{j=1}^{k-s}|x_j|^{4(2n+2)},$

\item[($\beta$)] $\displaystyle \sup_{|t|\leq\eps}|P(t)|\leq |2x_0|^{2k(2n+2)}$,

\item[($\gamma$)] there are $c_k>0$ (depending only on $k$) such that
 $$\sup_{t\in\R}\sup_{0\leq\ell\leq n}|f^{(\ell)}(t)|\leq c_k|x_k|^{3n+4}.$$
\end{itemize}
This will imply the proposition.

\noindent $(\alpha)$ $P(x)$ is of the form
$\displaystyle
P(x)=\prod_{\ell=-N,\ell\neq 0}^{N}(x-y_\ell)
$
where $y_{-\ell}=-y_\ell$, $y_\ell\in \{x_1,\ldots,x_k\}$, $N=k(2n+2)$, and each $\pm x_j$ appears $2n+2$ times.
We have
$$
P^{(m)}(0)=\sum_{|S|=2N-m}\prod_{\ell\in S}(-y_\ell)
$$
(more precisely, we sum over all subsets $S$ of $\{-N,\ldots,N\} \setminus\{0\}$ with $2N-m=2(2n+2)(k-s)$ elements). 
We claim that all terms of the sum with non-symmetric $S$ (i.e. $-S\neq S$) cancel.
Indeed if $S$ is non symmetric we replace $\ell$ by $-\ell$, where $|\ell|$ is minimal such that $\ell\in S$, $-\ell\notin S$  to obtain
$\tilde{S}$ with $\prod_{\ell\in \tilde{S}}y_\ell=-\prod_{\ell\in S}y_\ell$.
For symmetric $S$ all terms have the same sign $(-1)^{\frac{|S|}{2}}$ and we can therefore estimate $|P^{(m)}(0)|$ from 
below by the absolute value of any term of the sum (since $k\ge s$ the sum is not empty, in the extreme case $k=s$ it contains just one term for $S=\emptyset$).
Choosing $S$ so that $\{y_\ell:\ \ell>0,\ \ell\in S \} =    \{x_1,\ldots,x_{k-s}\}$ we obtain
$$
|P^{(m)}(0)|\geq \prod_{j=1}^{k-s}|x_j|^{2(2n+2)}.
$$

\noindent $(\beta)$ follows from
$$
|x^2-x_j^2|=|(x+x_j)(x-x_j)|\leq (2x_0)^2\ \mbox{ for }|x|\leq |x_0|.
$$

\noindent $(\gamma)$ For $|t|\leq |x_k|$ and $j\leq n$ we have, with the same notation as in $(\alpha)$,
$$
f^{(j)}(t)=P^{(j)}(t)=\sum_{|S|=2N-j}\prod_{\ell\in S}(t-y_\ell).
$$
Since $x_k$ and $-x_k$ together appear $2(2n+2)$ times in $P$ among all $y_\ell$, at least $3n+4$ appear
in each product $\prod_{l\in S}(t-y_l)$ which is thus in absolute value smaller than $|2x_k|^{3n+4}$
(the number of terms $\binom{2N}{2N-j}$ only depends on $n$ and $j$).
\end{proof}

\begin{corollary}
Let $(a_\ell)_{\ell\in\N}$ be a null-sequence such that $(|a_\ell|)_{\ell\in\N}$ decreases and $|a_\ell| <|a_{\ell-1}|/2$. If $K=\{0\}\cup \{a_\ell: \ell\in\N\}$ has
the smooth extension property then there is $p\ge 1$ such that 
${a_\ell^p}/{a_{\ell+1}}$ is bounded.
\end{corollary}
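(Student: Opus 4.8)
The plan is to combine the characterisation of the smooth extension property recalled in the introduction ($K$ has it if and only if $\II_K$ satisfies \Oma\ and $C^\infty(K)$ satisfies \DN) with Proposition~\ref{necessarysequencesdn}. Since every vanishing ideal $\II_K$ in $C^\infty(\R)$ satisfies \Oma\ by Theorem~\ref{Omega}, the smooth extension property of $K$ forces $C^\infty(K)$ to have property \DN, so some norm $\vertiii{\cdot}_n$ is dominating on $C^\infty(K)$; and the standing hypotheses on $(a_\ell)$ — it is a null sequence, $(|a_\ell|)$ decreases, $|a_\ell|<|a_{\ell-1}|/2$ (in particular no $a_\ell$ vanishes) — are precisely those required to apply Proposition~\ref{necessarysequencesdn} with this $n$. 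The crucial point is that I will only use that proposition in its degenerate instance $k=s$, where the product on the left-hand side is empty and equals $1$; for general $k$ the left-hand side is a product of many small factors and the resulting inequality carries essentially no information.

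Concretely, I would apply Proposition~\ref{necessarysequencesdn} with $s=1$ to obtain an exponent $r\ge1$, and then take $k=s=1$ to obtain a constant $C_1>0$; since $2k(2n+2)=4n+4$, the asserted inequality becomes, for every $d\in\N$,
\[
1\le \frac{C_1}{|a_d|^{r}}\,|a_d|^{4n+4}\,|a_{d+1}|^{3n+4}=C_1\,|a_d|^{4n+4-r}\,|a_{d+1}|^{3n+4},
\]
i.e.\ $|a_{d+1}|^{3n+4}\ge C_1^{-1}\,|a_d|^{r-4n-4}$. Fix $\ell_0$ with $|a_\ell|\le1$ for all $\ell\ge\ell_0$. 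If $r\le4n+4$, then for $d\ge\ell_0$ the exponent $r-4n-4$ is $\le0$, so $|a_d|^{r-4n-4}\ge1$ and hence $|a_{d+1}|\ge C_1^{-1/(3n+4)}>0$ along the whole tail, contradicting $a_\ell\to0$. Therefore $r>4n+4$, and with $p:=\max\big((r-4n-4)/(3n+4),\,1\big)\ge1$ the same inequality gives, for $d\ge\ell_0$,
\[
|a_{d+1}|\ \ge\ C_1^{-1/(3n+4)}\,|a_d|^{(r-4n-4)/(3n+4)}\ \ge\ C_1^{-1/(3n+4)}\,|a_d|^{p}
\]
(using $|a_d|\le1$ and $p\ge(r-4n-4)/(3n+4)$), so $|a_d|^{p}/|a_{d+1}|\le C_1^{1/(3n+4)}$ for $d\ge\ell_0$. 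The finitely many indices $d<\ell_0$ contribute only finitely many finite values since no $a_d$ is zero, whence $|a_\ell|^{p}/|a_{\ell+1}|$ is bounded on all of $\N$; this is the assertion.

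I expect the only genuine difficulty to be conceptual: recognising that the useful consequence of Proposition~\ref{necessarysequencesdn} is its empty-product case $k=s$, and then carrying out the short two-case argument on the sign of $r-4n-4$ which upgrades that inequality to an honest lower bound for $|a_{d+1}|$ in terms of a power of $|a_d|$. Everything else is bookkeeping. It is worth remarking that this yields only a partial converse to Proposition~\ref{sequences}: one gets boundedness of $|a_\ell|^{p}/|a_{\ell+1}|$ for \emph{some} $p\ge1$, whereas the sufficient condition there requires boundedness for \emph{all} $p>1$, and this gap cannot be closed in general.
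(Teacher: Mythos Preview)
Your proof is correct and follows the same route as the paper: apply Proposition~\ref{necessarysequencesdn} with $s=k=1$ so the left-hand product is empty, and read off a bound $|a_d|^{(r-4n-4)/(3n+4)}/|a_{d+1}|\le C_1^{1/(3n+4)}$, then adjust $p$ to be at least $1$. Your dichotomy on the sign of $r-4n-4$ is a correct extra observation but not strictly needed: since $|a_d|\le 1$ eventually, one may simply take $p=\max\big(1,(r-4n-4)/(3n+4)\big)$ and use $|a_d|^p\le|a_d|^{(r-4n-4)/(3n+4)}$ directly, which is what the paper's one-line proof does.
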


\begin{proof}
Taking $s=k=1$ in Proposition \ref{necessarysequencesdn} we get $|{a_\ell^p}/{a_{\ell+1}}|\leq C_1$ for any $p \ge \frac{r-2(2n+2)}{3n+4}$.
\end{proof}

From this corollary we get immediately the following example.

\begin{example}
\label{examplefactorial}
The set $K=\{0\}\cup \{e^{-\ell!}:\ \ell\in\N\}$ does not have the smooth extension property.
\end{example}

We finish with an example of a convex sequence $(a_{\ell})$ showing that boundedness of 
${a_{\ell}^{p}}/{a_{\ell+1}}$ for some fixed $p>1$ is not enough for the smooth extension property.

\begin{example}
\label{exampleexponential}
$K=\{0\}\cup \{e^{-p^\ell}:\ \ell\in\N\}$ with $p>1$ does not have the smooth extension property.
\end{example}

\begin{proof}
Assume that $\vertiii{\cdot}_n$ is a dominating norm. We fix  $s\in\N$ such that
$$
p^{1-s}4(2n+2)<(3n+4).
$$
Proposition \ref{necessarysequencesdn} gives some $r\in\N$ such that, for each $k\in\N$, $k\geq s$, the sequence
$$
q_d=a_{d}^{r-2k(2n+2)}a_{d+k}^{-(3n+4)}\prod_{j=1}^{k-s}a_{d+j}^{4(2n+2)}
$$
is bounded with respect to $d$. We calculate
$$
\prod_{j=1}^{k-s}a_{d+j}=\exp\left(\sum_{j=1}^{k-s}p^{d+j}\right)=\exp\left(-p^{d+1}\frac{p^{k-s}-1}{p-1}\right),
$$
hence
\begin{align*}
 q_d & =\exp\left(2k(2n+2)-r)p^d+(3n+4)p^{d+k}-4(2n+2)p^{d+1}\frac{p^{k-s}-1}{p-1}\right) \\
     & =\exp\left(p^d\left((2k(2n+2)-r)+4(2n+2)\frac{p}{p-1}\right.\right. \\
     & \left.\left. \hspace{3cm} +p^k((3n+4)-p^{1-s}4(2n+2)\right)\right).
\end{align*}
  For $k$ big enough such that
 $2k(2n+2)-r$ is positive, the sequence is unbounded with respect to $d$.
\end{proof}

It is interesting to compare this example (for an integer $p\ge 3$) with a result of Goncharov \cite{Go} who proved that the somehow similar set
\[
\tilde K=\{0\}\cup \bigcup_{\ell\in\N} \left[e^{-p^\ell},e^{-p^\ell}-e^{-p^{\ell+1}}\right]
\]
does satisfy the Whitney (and hence also the smooth) extension property. This can be also seen as an application of Theorem \ref{sufficient}.

\subsection*{Acknowledgements.} The research of all authors is partially supported by GVA  AICO/2016/054 . 
The research of the second author was partially supported by  MINECO,  Project MTM2013-43540-P. 

\bibliographystyle{amsalpha}
\bibliography{ideals-lit}

\end{document}